\numberwithin{equation}{section}
\theoremstyle{plain}
\newtheorem{theorem}{Theorem}[section]
\newtheorem{proposition}[theorem]{Proposition}
\newtheorem{corollary}[theorem]{Corollary}
\newtheorem{conjecture}[theorem]{Conjecture}
\theoremstyle{definition}
\newtheorem{definition}[theorem]{Definition}
\newtheorem{remark}[theorem]{Remark}
\newcommand{\Z}{\mathbb{Z}}
\newcommand{\Q}{\mathbb{Q}}
\newcommand{\AJac}{\mathcal{A}_{\text{Jac}}}
\newcommand{\Aemptyset}{\mathcal{A}(\emptyset)}
\newcommand{\AJacCoho}{\mathcal{A}_{\mathrm{Jac}}^{\mathrm{coho}}}
\newcommand{\tder}{\mathfrak{tder}}
\newcommand{\sder}{\mathfrak{sder}}
\newcommand{\lie}[1]{\mathfrak{lie}(#1)}
\newcommand{\tr}{\mathrm{tr}}
\newcommand{\divop}{\mathrm{div}}
\DeclareMathOperator{\LMOspec}{LMOspec}
\DeclareMathOperator{\Tors}{Tors}
\DeclareMathOperator{\id}{id}
\DeclareMathOperator{\Sym}{Sym}
\newcommand{\evM}{\text{ev}_M}
\newcommand{\thetaGraph}{\theta}
\title[The LMO spectrum]{The LMO spectrum: Factorization homology and the \texorpdfstring{$E_3$}{E3}-structure of the Jacobi diagram algebra}
\author{Takahito Kuriya}
\address{}
\email{takahito.kuriya@gmail.com}
\date{}
\subjclass{Primary 57K18, 57R56;
Secondary 18F30, 55P48, 17B65, 81T45}
\keywords{LMO invariant, factorization homology, $E_3$-algebra, Jacobi diagrams, lens spaces, Drinfeld associator, Kashiwara-Vergne problem, quantum topology}
\begin{document}
\begin{abstract}
We define the LMO spectrum, a categorification of the Le-Murakami-Ohtsuki (LMO) invariant for 3-manifolds, using factorization homology. The theoretical foundation is our main algebraic result (Theorem A): the algebra of Jacobi diagrams, $\AJac$, possesses a homotopy $E_3$-algebra structure. This is a necessary condition for consistency within factorization homology, and the proof relies on the formality of the little 3-disks operad. A universal surgery formula is derived from the excision axiom (Theorem B), providing a computational basis independent of conjectural models. As an application (Theorem C), we construct an ``$H_1$-decorated LMO invariant'' that distinguishes the lens spaces $L(156, 5)$ and $L(156, 29)$, a pair that the classical LMO invariant fails to separate.
\end{abstract}

\maketitle
\tableofcontents

\section{Introduction and main results}

\subsection{The LMO invariant and its limitations}
The Le-Murakami-Ohtsuki (LMO) invariant is a universal finite-type invariant for rational homology 3-spheres, with values in the algebra of Jacobi diagrams, $\Aemptyset$ \cite{LMO98}.
Its discriminatory power is limited for manifolds with non-trivial first homology.
For lens spaces, the invariant's dependence on the surgery parameter is determined by the Dedekind sum, which is not injective;
consequently, the LMO invariant fails to distinguish pairs such as $L(25,4)$ and $L(25,9)$ \cite{BL04}.

To overcome this limitation, we propose a categorification of the LMO invariant based on factorization homology \cite{AF15}.
For an $n$-manifold, this theory requires an $E_n$-algebra as coefficients \cite{LV12, Lurie17}.
We define the \emph{LMO spectrum} for a 3-manifold $M$ as:
$$\LMOspec(M) := \int_{M} \AJac$$
By construction, the classical LMO invariant is recovered as the 0-th homology, $H_0(\LMOspec(M))$.
The higher homology groups, $H_k(\LMOspec(M))$, are expected to yield new, stronger invariants.

\subsection{Theoretical framework and guiding principles}
The definition of the LMO spectrum is contingent on $\AJac$ being a homotopy $E_3$-algebra.
This requirement arises from the axioms of factorization homology, which mandate an $E_n$-algebra as coefficients for any theory of $n$-manifolds \cite{AF15, Lurie17}.
The structure of an $E_3$-algebra is universally governed by the little 3-disks operad, $D_3$.
The formality theorem for this operad guarantees that its algebraic structure can be modeled by a combinatorial graph complex whose fundamental algebraic relation is the IHX relation \cite{Kontsevich99, LV14, FW15}.
This identifies the algebra of Jacobi diagrams, $\AJac$, as the natural candidate for the coefficient algebra.

The structure of the LMO spectrum's homology, and the proof of the required $E_3$-structure, are guided by the Atiyah-Hirzebruch spectral sequence (AHSS) \cite{AH61}.
Since factorization homology is a generalized homology theory, the AHSS relates the ordinary homology of $M$ to the homology of the LMO spectrum.
Its $E^2$-page is:
$$E^2_{p,q} = H_p(M; H_q(\mathcal{A}_{Jac})) \implies H_{p+q}(\LMOspec(M))$$
This expression decomposes the homology of the LMO spectrum into contributions from the topology of $M$ (via $H_p(M)$) and the algebra of the coefficients (via $H_q(\mathcal{A}_{Jac})$).
The hierarchical proof of the $E_3$-structure in Section 5 maps directly onto this decomposition, as summarized in Table \ref{tab:ahss_algebra}.

\begin{table}[h!]
\centering
\caption{Correspondence between algebraic structures and the AHSS $E^2$-page}
\label{tab:ahss_algebra}
\renewcommand{\arraystretch}{1.3}
\begin{tabularx}{\textwidth}{@{} l l >{\raggedright\arraybackslash}X >{\raggedright\arraybackslash}X @{}}
\toprule
\textbf{(p,q)} & \textbf{$E^2_{p,q}$ Term} & \textbf{Topological Interpretation} & \textbf{Governing Algebraic Structure} \\
\midrule
(0,0) & $H_0(M; H_0(\AJac))$ & Classical LMO invariant & Commutative product in $\AJac$ \\
(1,0) & $H_1(M; H_0(\AJac))$ & Loops decorated by $H_0(\AJac)$ & Interaction of classical part and $H_1(M)$ \\
(0,1) & $H_0(M; H_1(\AJac))$ & Algebraic structure on points & $L_\infty$ structure (BV operator, Lie bracket) \\
(1,1) & $H_1(M; H_1(\AJac))$ & Loops decorated by $H_1(\AJac)$ & Beilinson-Drinfeld (BD) algebra \\
\midrule
\multicolumn{2}{@{}l}{General $(p,q)$, $p \le 3$} & Higher-dimensional structures & Full homotopy $E_3$-algebra \\
\bottomrule
\end{tabularx}
\end{table}

This correspondence shows that each level of the algebraic hierarchy is required 
for the consistency of interactions within the LMO spectrum's homology.
Higher-order invariants, such as Massey products, correspond to the action of higher differentials $d^r$ ($r \ge 2$) in the spectral sequence.

\subsection{Main results}
This work connects the diagrammatic calculus of perturbative invariants with factorization homology.
It builds on work by Kontsevich on graph complexes \cite{Kontsevich94, Kontsevich99}, the theory of algebraic operads \cite{LV12}, and the relation of diagrammatic invariants to Massey products by Garoufalidis and Levine \cite{GL05}.
The main results and contributions are as follows:

\textbf{Theorem A.} The algebra of Jacobi diagrams, $\AJac$, has the structure of a homotopy $E_3$-algebra.
This result provides the theoretical foundation for the LMO spectrum, ensuring its consistency within the factorization homology framework.

\textbf{Theorem B.} Applying the excision axiom in factorization homology yields a universal surgery formula that computes the LMO spectrum for manifolds presented by Dehn surgery.
This derivation from first principles provides a computational method independent of conjectural models.

\textbf{Theorem C.} An $H_1$-decorated invariant, constructed from the LMO spectrum, distinguishes the lens spaces $L(156, 5)$ and $L(156, 29)$.
This demonstrates that the LMO spectrum can produce computable invariants that are stronger than the classical LMO invariant, which fails to distinguish this pair.

The proven $E_3$-structure endows the LMO spectrum with a Differential Graded Algebra (DGA) structure, which provides a setting for defining higher-order invariants such as Massey products.

\section{The axiomatic framework of factorization homology}

\subsection{Factorization homology as a local-to-global functor}
Factorization homology is a symmetric monoidal functor from the category of framed $n$-manifolds, $\text{Mfld}_n^{\text{fr}}$, to a symmetric monoidal category $\mathcal{V}$.
In this paper, $\mathcal{V}$ is taken to be the category of chain complexes over $\Q$ \cite{AF15}.
The functor is denoted:
$$\int_{(-)} A: \text{Mfld}_n^{\text{fr}} \to \mathcal{V}$$
This functor is determined by its value on the disk $\mathbb{R}^n$, which must be an $E_n$-algebra $A \in \text{Alg}_{E_n}(\mathcal{V})$. Intuitively, factorization homology extends the local algebraic data of an $E_n$-algebra to global invariants of manifolds by gluing along disks.
This construction yields a system of invariants for manifolds and their submanifolds that is consistent with the locality principle of topological quantum field theory \cite{AF15, Ginot15}.

\subsection{The excision axiom}
The primary computational tool for factorization homology is the excision axiom, which serves as a homotopy-theoretic version of the Mayer-Vietoris principle.
If a manifold $M$ is the union of two open submanifolds, $M = U \cup V$, with intersection $W = U \cap V$, this decomposition forms a homotopy pushout square.
The excision axiom states that the factorization homology functor preserves this structure, yielding a homotopy pushout square in the target category $\mathcal{V}$ \cite{AF15}.
For a decomposition $M = U \cup_W V$, the axiom implies the following equivalence, expressed using the derived tensor product:
$$\int_M A \simeq \int_U A \underset{\int_W A}{\overset{\mathbb{L}}{\otimes}} \int_V A$$
In this formula, $\int_U A$ and $\int_V A$ are the invariants of the submanifolds.
The derived tensor product over $\int_W A$ is the algebraic operation corresponding to the gluing of the submanifolds along their intersection $W$ in a homotopy-coherent manner.

\section{A universal surgery formula from the excision axiom}

\subsection{Dehn surgery as a homotopy pushout}
The construction of 3-manifolds via Dehn surgery provides the topological foundation for the computational framework of this paper.
By the Lickorish-Wallace theorem, any closed, oriented 3-manifold can be obtained by surgery on a framed link $L$ in the 3-sphere, $S^3$ \cite{Lickorish62, Wallace60}.
For a link $L = K_1 \cup \dots \cup K_n$, this procedure consists of excising a tubular neighborhood $\nu(L)$ and reattaching a collection of solid tori, $\bigsqcup_{i=1}^n (S^1 \times D^2)_i$, to the boundary of the link complement, $S^3 \setminus \nu(L)$, via a gluing map $\phi_L$.
This topological construction is described precisely as a homotopy pushout in the category of topological spaces \cite{AF15}.
The resulting manifold, denoted $M_L$, is the homotopy pushout of the following diagram of inclusion maps:
$$
\begin{tikzcd}
\bigsqcup_{i=1}^n T^2_i \arrow{r} \arrow{d}{\phi_L} & \bigsqcup_{i=1}^n (S^1 \times D^2)_i \arrow{d} \\
S^3 \setminus \nu(L) \arrow{r} & M_L
\end{tikzcd}
$$
This diagram provides the geometric input for the application of the excision axiom.

\subsection{The LMO spectrum surgery formula}
The excision axiom of factorization homology states that the functor $\int_{(-)} \AJac$ preserves homotopy pushouts.
Applying this axiom to the topological diagram for Dehn surgery yields an algebraic pushout diagram in the category of chain complexes.
This gives a universal surgery formula.

\begin{theorem}
\label{thm:surgery_formula}
Let $M_L$ be a 3-manifold obtained by Dehn surgery on a framed link $L \subset S^3$ with gluing map $\phi_L$.
The LMO spectrum of $M_L$ is given by the homotopy pushout:
$$ \LMOspec(M_L) \simeq \LMOspec(S^3 \setminus \nu(L)) \underset{\LMOspec(\partial(S^3 \setminus \nu(L)))}{\overset{\mathbb{L}}{\otimes}} \int_{\bigsqcup (S^1 \times D^2)_i} (\phi_L)_*(\mathcal{A}_{\text{Jac}}) $$
where the term $(\phi_L)_*(\mathcal{A}_{\text{Jac}})$ indicates that the module structure over the boundary is twisted by the action of the gluing map $\phi_L$.
\end{theorem}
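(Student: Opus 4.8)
The plan is to deduce the formula by applying the excision axiom of Section 2.2 directly to the homotopy pushout presentation of $M_L$ recorded in Section 3.2. The substantive work is to convert the gluing description of Dehn surgery into a genuine \emph{collar-gluing} of open framed submanifolds, so that excision applies verbatim, and then to identify the three resulting corner terms with the factors in the statement and to locate the surgery data $\phi_L$ within the module structures.

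First I would fatten the gluing locus into an open cover. Write $M_L = U \cup V$, where $U$ is the open submanifold obtained from the link complement by pushing slightly across the gluing torus into the reglued solid tori, and $V$ is obtained from $\bigsqcup_i (S^1 \times D^2)_i$ by pushing slightly into the complement; their intersection $W = U \cap V$ is an open collar diffeomorphic to $\bigl(\bigsqcup_i T^2_i\bigr) \times \R$. The collar structure exhibits $U$ as diffeomorphic to the open link complement, $V$ as diffeomorphic to the disjoint union of open solid tori, and $W$ as the three-dimensional collar neighborhood of the boundary $\partial(S^3\setminus\nu(L)) = \bigsqcup_i T^2_i$. Since factorization homology carries diffeomorphisms of framed manifolds to equivalences, applying $\int_{(-)}\AJac = \LMOspec(-)$ to this collar-gluing and invoking excision produces the derived tensor product over the boundary state space $\int_W \AJac$, which is what the statement denotes $\LMOspec(\partial(S^3\setminus\nu(L)))$, with outer factors $\LMOspec(S^3\setminus\nu(L))$ and $\int_{\bigsqcup_i(S^1\times D^2)_i}\AJac$.

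The next step is to pin down where $\phi_L$ enters. The derived tensor product is formed relative to the two module structures on $\int_W\AJac$ induced by the collar inclusions $W\hookrightarrow U$ and $W\hookrightarrow V$. The inclusion into the complement side is the untwisted one coming from the standard boundary parametrization of $S^3\setminus\nu(L)$; the inclusion into the solid-torus side, by contrast, is precomposed with the surgery homeomorphism $\phi_L\in\prod_i\mathrm{MCG}(T^2_i)\cong\prod_i SL(2,\Z)$. By functoriality of factorization homology in framed embeddings, $\phi_L$ acts on the state space $\int_W\AJac$ and thereby twists the left-module structure of the solid-torus factor; this twisted module is precisely the object denoted $(\phi_L)_*(\AJac)$ in the statement. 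Transporting this twist through the diffeomorphisms of the previous paragraph yields the asserted pushout.

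The hard part will be the framing bookkeeping. The functor $\int_{(-)}\AJac$ is defined on \emph{framed} manifolds, yet the mapping classes realizing Dehn surgery need not preserve a chosen framing of $T^2$; consequently $\phi_L$ must first be promoted to a framed self-equivalence of the collar, and any discrepancy between source and target framings contributes an extra automorphism of $\int_W\AJac$ that must be folded into the definition of $(\phi_L)_*$. Verifying that this framing correction is well-defined, compatible with the derived tensor product, and depends only on the isotopy class of $\phi_L$ is the single point demanding genuine care; every other identification is a formal consequence of excision together with the diffeomorphism-invariance of factorization homology.
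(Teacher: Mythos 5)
Your proposal follows essentially the same route as the paper: Theorem 4.1 is obtained there exactly by combining the homotopy-pushout presentation of Dehn surgery from Section 3.2 with the excision axiom of Section 2.2, with $\phi_L$ entering through the twisted left-module structure, which is precisely your argument. If anything, your write-up is more careful than the paper's own two-sentence derivation --- the collar-fattening needed to make excision apply verbatim and the framing bookkeeping for promoting $\phi_L$ to a framed self-equivalence are both elided in the text --- so the subtleties you flag as demanding genuine care are real, but they are left unaddressed by the paper itself rather than resolved differently.
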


This theorem provides a universal, axiomatically-grounded formula for computing the LMO spectrum of any 3-manifold given by a surgery presentation.
It reduces the computation to three components: the spectra of the link complement and the solid tori, and the action of the mapping class group on the spectrum of the boundary.

\subsection{Analogy with the Witten-Reshetikhin-Turaev formula}
The surgery formula of Theorem \ref{thm:surgery_formula} can be viewed as a structural categorification of the surgery formula for the Witten-Reshetikhin-Turaev (WRT) invariant \cite{Witten89, RT91}.
In the WRT framework, the invariant of a surgically-constructed manifold is computed as a weighted sum of the quantum invariants of the surgery link.
The weights are given by matrix elements of the mapping class group representation acting on the state space defined on the boundary tori.
The formula in Theorem \ref{thm:surgery_formula} realizes this structure at the level of homotopy theory.
The correspondence is as follows:
\begin{itemize}
    \item $\LMOspec(S^3 \setminus \nu(L))$ corresponds to the categorified quantum link invariant, or state vector.
    \item $\LMOspec(\partial(S^3 \setminus \nu(L)))$ corresponds to the categorified state space on the boundary tori.
    \item The derived tensor product, $\overset{\mathbb{L}}{\otimes}$, is the homotopy-coherent generalization of the weighted sum over states.
    \item The twisting of the coefficients by $(\phi_L)_*$ corresponds to the action of the mapping class group representation.
\end{itemize}
This correspondence gives a homotopy-theoretic analogue of the WRT surgery structure.

\section{The algebraic origin of the \texorpdfstring{$E_3$}{E3}-structure}
\label{sec:algebraic_origin}

This section details the theoretical motivation for Theorem A. The requirement that $\AJac$ possess a homotopy $E_3$-algebra structure is a necessary consequence of applying the factorization homology framework to 3-manifolds.

\subsection{The axiomatic requirement for an \texorpdfstring{$E_3$}{E3}-algebra}
A foundational result of factorization homology states that a well-defined invariant for $n$-manifolds requires its coefficient algebra to have the structure of an $E_n$-algebra \cite{AF15, Lurie17}.
An $E_n$-algebra encodes commutativity up to coherent homotopies in $n$ independent directions, corresponding to the geometry of disjoint disks in $\mathbb{R}^n$.
This condition reflects the locality principle of factorization homology, where the algebra encodes how invariants of small disks assemble into global invariants.
Since the LMO spectrum is an invariant of 3-manifolds, its coefficient algebra, $\AJac$, must be a homotopy $E_3$-algebra.
The proof of this structure is therefore necessary to establish the validity of the LMO spectrum framework.

\subsection{The little disks operad and the formality theorem}
The universal algebraic structure of an $E_n$-algebra is governed by the little $n$-disks operad, $D_n$.
This is a topological operad whose spaces consist of configurations of disjoint disks within a larger disk.
The formality theorem of Kontsevich establishes a bridge between the topology of this operad and a combinatorial algebraic model.

\begin{theorem}[\cite{Kontsevich99,Willwacher15}]
Over the rational numbers $\Q$, the singular chain complex of the little $n$-disks operad, $C_*(D_n)$, is quasi-isomorphic to an algebraic operad constructed from a graph complex.
\end{theorem}

This theorem allows the abstract, topological properties of an $E_n$-structure to be studied using combinatorial graph theory.

\subsection{Graph complexes and the IHX relation}
The formality theorem provides an explicit algebraic model for the $E_3$-structure in terms of a graph complex.
The fundamental algebraic relation governing this complex is the IHX relation.
The algebra of Jacobi diagrams, $\AJac$, is also governed by the IHX relation, where it represents the Jacobi identity underlying finite-type invariants.
This shared algebraic foundation identifies $\AJac$ as the canonical coefficient algebra for the construction of the LMO spectrum.

Therefore, the proof of Theorem A is essential. It establishes that the LMO spectrum is constructed on a consistent $E_3$-algebraic foundation, as required by the factorization homology framework.

\section{The hierarchical construction of the \texorpdfstring{$E_3$}{E3}-algebra structure}
\label{sec:e3_construction}
This section provides a proof of Theorem A. We establish that the algebra of Jacobi diagrams, $\AJac$, possesses the structure of a homotopy $E_3$-algebra. The construction is hierarchical, proceeding through three stages of increasing algebraic complexity. This progression corresponds to the interactions suggested by the Atiyah-Hirzebruch spectral sequence (Table \ref{tab:ahss_algebra}). We begin by constructing an $L_\infty$-structure derived from the Batalin-Vilkovisky formalism. Next, we introduce a product modeling 2-dimensional interactions to form a Beilinson-Drinfeld algebra. Finally, we employ the Homotopy Transfer Theorem for algebras over operads to establish the full homotopy $E_3$-structure.

\subsection{The \texorpdfstring{$L_\infty$}{L-infinity}-structure from the Batalin-Vilkovisky formalism}
The first layer of the algebraic hierarchy is an $L_\infty$-structure. This structure arises from a canonical operator on the graph complex, which we interpret within the Batalin-Vilkovisky (BV) formalism—a framework originating in the quantization of gauge theories.

\begin{remark}[Grading convention]\label{rem:grading_convention}
In the graph-complex convention used by Conant--Vogtmann \cite{CV03}, the operator $\partial_H$ has degree $-1$ (homological convention). In contrast, in the standard definition of a Beilinson--Drinfeld algebra (see \cite{DJPP23}), the operator $\Delta$ has degree $+1$ (cohomological convention). These conventions are equivalent via a regrading (formally, $C^n := C_{-n}$). 

Throughout this section we adopt the cohomological convention. In particular:
\begin{itemize}
  \item $\partial_H$ and the associated BV/BD operator $\Delta$ are taken to have degree $+1$,
  \item the induced antibracket also has degree $+1$,
  \item and the symbol $|\Gamma|$ always denotes the \emph{cohomological degree} of a diagram $\Gamma$.
\end{itemize}
This ensures consistency across the BV, BD, and $E_3$ frameworks.
\end{remark}

\begin{definition}[Conant-Vogtmann Operator]
Following Conant and Vogtmann \cite{CV03}, we define a linear map 
$\partial_H: \AJac \to \AJac$ of degree $+1$ (in the cohomological convention; see Remark~\ref{rem:grading_convention}). 
For a graph $\Gamma \in \AJac$, the image $\partial_H(\Gamma)$ is the sum over all graphs obtained by contracting a new edge formed by joining any pair of distinct half-edges in $\Gamma$.
\end{definition}

The nilpotency of this operator is a direct and fundamental consequence of the IHX relation.

\begin{proposition}[\cite{CV03}]
\label{prop:cv_nilpotent}
The operator $\partial_H$ is a differential, that is, $\partial_H^2 = 0$.
\end{proposition}

This property allows us to identify $\AJac$ as a Batalin-Vilkovisky algebra, thereby placing our construction within a universal algebraic framework.

\begin{definition}
A \textbf{Batalin-Vilkovisky (BV) algebra} is a graded commutative algebra $(A, \cdot)$ 
equipped with a linear operator $\Delta$ of degree $+1$, called the BV operator, 
satisfying $\Delta^2 = 0$. The failure of $\Delta$ to be a derivation of the product 
defines a Lie bracket of degree $+1$, known as the antibracket.
\end{definition}

\begin{proposition}
The algebra of Jacobi diagrams $(\AJac, \sqcup)$, where $\sqcup$ denotes the disjoint union product, equipped with the operator $\Delta := \partial_H$, forms a BV algebra.
\end{proposition}

A universal principle in homotopy algebra, established by Voronov, demonstrates that any BV algebra gives rise to an $L_\infty$-algebra through a canonical construction of higher derived brackets.

\begin{definition}[$L_\infty$ Operations on $\AJac$]
We define a family of multilinear operations $\{l_k\}_{k \ge 1}$ on $\AJac$ generated by the BV operator $\Delta = \partial_H$.
\begin{itemize}
    \item The unary bracket is the differential itself: $l_1(\Gamma) := \partial_H(\Gamma)$, of degree $+1$.
    \item The binary bracket is the antibracket derived from the BV operator, also of degree $+1$:
    \begin{align*}
     l_2(\Gamma_1, \Gamma_2) := (-1)^{|\Gamma_1|} \big( & \partial_H(\Gamma_1 \sqcup \Gamma_2) - (\partial_H \Gamma_1) \sqcup \Gamma_2 \\ & - (-1)^{|\Gamma_1|} \Gamma_1 \sqcup (\partial_H \Gamma_2) \big).
    \end{align*}
    \item The higher brackets $l_k$ for $k \ge 3$ are the higher derived brackets generated systematically by $\partial_H$, all of degree $+1$.
\end{itemize}
\end{definition}

\begin{theorem}
The pair $(\AJac, \{l_k\}_{k \ge 1})$ forms an $L_\infty$-algebra (with respect to the cohomological grading, so that $l_1=\partial_H$ and $l_2$ both have degree $+1$).
\end{theorem}
\begin{proof}
This is a direct consequence of the underlying BV structure. The universal framework of Voronov \cite{Voronov05} states that for any graded commutative algebra equipped with a nilpotent odd derivation (a BV operator), the resulting family of higher derived brackets automatically satisfies the generalized Jacobi identities of an $L_\infty$-algebra. In our context, the algebra is $(\AJac, \sqcup)$ and the nilpotent odd operator is $\partial_H$. Since Proposition \ref{prop:cv_nilpotent} ensures $\partial_H^2 = 0$, it follows that $(\AJac, \{l_k\})$ is an $L_\infty$-algebra. This confirms that our construction is a specific instance of the universal structure predicted by the BV formalism.
\end{proof}

\subsection{The Beilinson-Drinfeld structure from 2-dimensional interactions}
The next level of the hierarchy introduces operations that correspond to 2-dimensional interactions, analogous to the connected sum of surfaces.

\begin{definition}
Following the framework of modular operads with a connected sum \cite{DJPP23}, we define two connected sum operations.
\begin{enumerate}
    \item \textbf{Binary Connected Sum ($\#_2$):} For two Jacobi diagrams $\Gamma_1, \Gamma_2$, their binary connected sum, $\Gamma_1 \#_2 \Gamma_2$, is a degree $+1$ commutative product. It is defined as the sum over all graphs obtained by selecting an internal edge from each of $\Gamma_1$ and $\Gamma_2$, making an incision in each, and reconnecting the resulting four half-edges in the two ways that yield a single connected graph.
    \item \textbf{Unary Connected Sum ($\#_1$):} For a single diagram $\Gamma$, its unary connected sum, $\#_1(\Gamma)$, is a degree $+2$ unary operation. It is defined by applying the same procedure to two distinct internal edges within $\Gamma$.
\end{enumerate}
\end{definition}

\begin{remark}[Geometric Interpretation]
The unary operation $\#_1$ has a clear geometric interpretation. If a Jacobi diagram is viewed as a fat graph, which can be thickened into a surface, the operation of selecting and reconnecting two internal edges corresponds to adding a handle to this surface, thereby increasing its genus by one.
\end{remark}

The connection between the abstract structure of modular operads and concrete algebraic 
structures such as Batalin--Vilkovisky (BV) and Beilinson--Drinfeld (BD) algebras is 
established in general by the Getzler--Kapranov framework of modular operads \cite{GK98}. 
In the special case where the underlying vector space carries an odd symplectic form, 
this framework specializes to the BV-geometry formalism of Barannikov \cite{Barannikov07}.

\begin{definition}[Getzler--Kapranov \cite{GK98}]
Let $P$ be a modular operad and $V$ a finite-dimensional graded vector space. 
The associated \emph{space of functions} is defined as the completed symmetric algebra
\[
\mathrm{Fun}_P(V) := \widehat{S}\big( P(V) \otimes V^* \big).
\]
Elements of $\mathrm{Fun}_P(V)$ are formal power series whose coefficients belong to the 
modular operad $P(V)$ and whose variables are linear functionals on $V$.
\end{definition}

\begin{remark}[Relation to Barannikov \cite{Barannikov07}]
When $V$ is taken to be an \emph{odd symplectic vector space}, the Getzler--Kapranov construction 
acquires an additional structure: the natural Laplace operator associated with the symplectic form 
endows $\mathrm{Fun}_P(V)$ with a canonical Batalin--Vilkovisky operator $\Delta$ of degree $+1$, 
satisfying $\Delta^2=0$. 
This specialization recovers Barannikov’s BV-geometry interpretation as a particular instance 
of the general Getzler--Kapranov framework.
\end{remark}

\begin{proposition}
Let $\mathcal{R}$ be the modular operad of ribbon graphs. 
Then the algebra of Jacobi diagrams $\AJac$ can be identified with the space of functions 
$\mathrm{Fun}_{\mathcal{R}}(V)$ in the case where $V$ is the trivial graded vector space. 
\end{proposition}

\begin{corollary}
Under this identification, $\AJac$ inherits a canonical BV operator $\Delta$, which coincides with 
the Conant--Vogtmann edge-contraction operator $\partial_H$.
\end{corollary}

A result by Doubek, Jurčo, Peksová, and Pulmann shows that this BV structure can be promoted to a BD structure if the modular operad is equipped with a connected sum operation \cite{DJPP23}.

\begin{definition}
A \textbf{Beilinson-Drinfeld (BD) algebra} is a graded commutative associative algebra $A$ over $k[[\hbar]]$ with a degree $+1$ bracket $\{, \}$ and a degree $+1$ operator $\Delta$ satisfying $\Delta^2=0$ and the relation
\[
\Delta(XY) = (\Delta X)Y + (-1)^{|X|}X(\Delta Y) + (-1)^{|X|}\hbar\{X, Y\}.
\]
\end{definition}

\begin{theorem}\label{thm:BD}
The algebra $\AJac$, equipped with the BV operator $\Delta = \partial_H$, the antibracket $\{, \} = l_2$, and the product $\#_2$, forms a Beilinson-Drinfeld algebra.
\end{theorem}
\begin{proof}
The proof consists of applying the main theorem of Doubek et al. \cite{DJPP23}. Their theorem states that if a modular operad $P$ is equipped with a "connected sum" structure satisfying a set of axioms (CS1) through (CS6), then the associated space of functions $\mathrm{Fun}_P(V)$ is a Beilinson-Drinfeld algebra.
\begin{enumerate}
    \item The underlying modular operad is that of ribbon graphs, as introduced by Getzler and Kapranov \cite{GK98}.
    \item The algebra $\AJac$ is identified with the associated space of functions. By the preceding corollary, this space carries a canonical BV structure where the operator $\Delta$ is given by $\partial_H$.
    \item The core of the proof is the verification that the connected sum operations $\#_1$ and $\#_2$ on ribbon graphs satisfy the axioms (CS1) through (CS6) of \cite{DJPP23}. This verification, detailed in Appendix C, holds because surplus terms that arise in the diagrammatic expansions combine to form instances of the IHX relation, and therefore vanish in $\AJac$.
\end{enumerate}
Since the axioms are satisfied, the main theorem of \cite{DJPP23} applies directly, establishing that $(\AJac, \Delta, \{, \}, \#_2)$ forms a Beilinson-Drinfeld algebra.
\end{proof}

\subsection{The full homotopy \texorpdfstring{$E_3$}{E3}-structure via Homotopy Transfer}

We now establish the final step of the proof of Theorem A, namely that $\AJac$ admits a homotopy $E_3$-algebra structure. This will be achieved by transferring the strict $E_3$-structure on the graph complex $(C,\partial_H)$ (Proposition \ref{prop:source_e3_revised}) to its homology $\AJac$.

\begin{proposition}
\label{prop:source_e3_revised}
Let $(C,\partial_H)$ be the completed graph complex of trivalent graphs, where $\partial_H$ has degree $+1$ (in the cohomological grading). By the formality of the little disks operad over $\mathbb{Q}$, established by Lambrechts--Voli\'c \cite{LV14} and Fresse--Willwacher \cite{FW15}, there exists a dg-operad quasi-isomorphism between the singular chain complex of the operad and a combinatorial graph model:
\[
C_*(E_3) \;\simeq\; (C,\partial_H).
\]
Thus, the graph complex $(C,\partial_H)$ provides a dg-operadic model for an $E_3$-operad.
\end{proposition}

\begin{definition}[\cite{LV12}]
A \emph{contraction} (also called a strong deformation retract) between two chain complexes $(A,d_A)$ and $(H,d_H)$ consists of a triple of maps
\[
(i,p,h): \quad (H,d_H) \;\rightleftarrows\; (A,d_A)
\]
such that
\begin{enumerate}
    \item $i: H \to A$ and $p: A \to H$ are chain maps with $p \circ i = \mathrm{id}_H$,
    \item $h: A \to A[-1]$ is a homotopy operator of degree $-1$ satisfying
    \[
    d_A h + h d_A = \mathrm{id}_A - i \circ p,
    \]
    \item the \emph{side conditions} $p h = 0$, $h i = 0$, and $h^2 = 0$ hold.
\end{enumerate}
\end{definition}

In our situation, $(A,d_A) = (C,\partial_H)$ is the completed graph complex, and $(H,d_H) = (\AJac,0)$ is its homology. Since $\AJac \cong H(C,\partial_H)$, such a contraction $(i,p,h)$ always exists by standard homological perturbation theory.

\begin{theorem}[\cite{LV12}]
\label{thm:LV_HTT}
Let $\mathcal{O}$ be a Koszul operad. If $(A,d_A)$ is a strict $\mathcal{O}$-algebra and $(H,d_H)$ is its homology, then given a contraction $(i,p,h)$ between $A$ and $H$, the strict $\mathcal{O}$-algebra structure on $A$ transfers to an $\mathcal{O}_\infty$-algebra structure on $H$.
\end{theorem}

In our case, the relevant operad is the little 3-disks operad $E_3$. It is known that its homology operad is the Poisson operad $\mathrm{Pois}_3$, which is Koszul (see \cite[§7.3]{LV12}). Therefore Theorem \ref{thm:LV_HTT} applies provided the contraction data $(i,p,h)$ is compatible with the operad structure.

\begin{theorem}[\cite{Berglund14}]
\label{thm:berglund}
Let $\mathcal{O}$ be an operad over a field of characteristic zero. Suppose $(A,d_A)$ is a strict $\mathcal{O}$-algebra and $(H,0)$ is its homology. If there exists a contraction $(i,p,h)$ where the homotopy $h$ can be chosen to be a \emph{pseudo-derivation} (that is, $h$ satisfies compatibility conditions ensuring the transferred higher operations respect the $\mathcal{O}$-algebra relations), then the $\mathcal{O}$-algebra structure on $A$ transfers to an $\mathcal{O}_\infty$-algebra structure on $H$.
\end{theorem}

\begin{remark}[Pseudo-derivation condition]
For chain complexes over a field of characteristic zero (such as $\mathbb{Q}$), Berglund proves that one can always adjust $h$ to satisfy the pseudo-derivation property. This ensures that the transferred structure is well defined as an $\mathcal{O}_\infty$-algebra.
\end{remark}

\begin{theorem}
\label{thm:AJac_E3}
The algebra of Jacobi diagrams $\AJac$ admits the structure of a homotopy $E_3$-algebra.
\end{theorem}

\begin{proof}
We apply Theorems \ref{thm:LV_HTT} and \ref{thm:berglund} with $\mathcal{O}=E_3$.
\begin{enumerate}
    \item \textbf{Source algebra:} By Proposition \ref{prop:source_e3_revised}, the graph complex $(C,\partial_H)$ is a strict $E_3$-algebra via its identification with the Kontsevich--Willwacher graph operad model for $E_3$.
    \item \textbf{Target algebra:} The homology is $\AJac \cong H(C,\partial_H)$.
    \item \textbf{Contraction:} A contraction $(i,p,h)$ exists between $C$ and $\AJac$. By Berglund’s theorem, $h$ can be chosen as a pseudo-derivation.
\end{enumerate}
Therefore the strict $E_3$-structure on $(C,\partial_H)$ transfers to a homotopy $E_3$-structure on $\AJac$.
\end{proof}

\begin{remark}[On the framed versus unframed setting]
The proof of Theorem~A requires only the formality of the \emph{unframed} little disks operad $E_3$.
This suffices to endow $\AJac$ with a homotopy $E_3$-algebra structure.
However, in the context of $3$-manifold invariants such as the LMO spectrum, the relevant operad is the
\emph{framed} little disks operad $E_3^{\mathrm{fr}}$, which incorporates the $SO(3)$--symmetry of framings.
On the graph complex side this corresponds to enlarging Jacobi diagrams to include wheels and struts.
Thus, while Theorem~A is established in the unframed setting, the framed variant will be required for applications to factorization homology and topological quantum field theory.
\end{remark}

The formulas produced by the Homotopy Transfer Theorem are expressed as universal sums over rooted trees whose vertices are decorated by operations of $E_3$ and whose internal edges are weighted by the homotopy $h$ (see \cite[§10.3.3]{LV12}). In general, such formulas may be infinite. To guarantee convergence, one works in the completed graph complex.

\begin{definition}[Loop-degree filtration]
Let $F^k C \subset C$ be the subspace spanned by graphs of loop degree (i.e.~first Betti number) at least $k$. This defines a decreasing filtration
\[
C = F^0 C \supset F^1 C \supset F^2 C \supset \cdots
\]
satisfying $\partial_H(F^k C) \subset F^k C$.
\end{definition}

The associated graded object $\mathrm{gr}(C)$ is finite-dimensional in each degree. Passing to the completion
\[
\widehat{C} := \varprojlim_{k} C/F^k C
\]
ensures that infinite linear combinations of graphs with increasing loop order are well defined.

\begin{remark}
This is the same completion that appears in Kontsevich’s graph complex and in Willwacher’s work on the formality of the little disks operad. The filtration guarantees that every operation obtained by homotopy transfer lands in the completed tensor product, hence is convergent.
\end{remark}

\begin{proposition}
The homotopy $E_3$-operations on $\AJac$ obtained via the Homotopy Transfer Theorem converge in the completed graph complex $\widehat{C}$. In particular, the universal tree-sum formulas for the transferred operations define well-defined multilinear maps
\[
l_k : \AJac^{\otimes k} \to \AJac
\]
for all $k \ge 1$, each of degree $+1$.
\end{proposition}

\begin{proof}
Each rooted tree with $m$ internal edges contributes a term in which $m$ copies of the homotopy $h$ appear. Since $h$ has degree $-1$ and raises the loop-degree filtration, terms of large $m$ lie in arbitrarily deep levels of the filtration. Hence, for any fixed graph degree, only finitely many tree contributions survive. This implies that the transferred operations converge in the completed sense, and their cohomological degree is $+1$.
\end{proof}

Thus the homotopy $E_3$-structure on $\AJac$ is rigorously defined in the completed category of filtered chain complexes.

\section{Connections to universal Lie theory}

We relate the results to operad formality and universal Lie theory. The $E_3$-structure of $\AJac$ and the functional form of the W-factor are shown to be consistent with universal structures such as the Drinfeld associator and solutions to the Kashiwara-Vergne problem.

\subsection{Operad formality and the Drinfeld associator}
The Drinfeld associator, $\Phi_{KZ}$, is a formal power series that satisfies the pentagon and hexagon identities, which universally govern the structure of braided monoidal categories \cite{Drinfeld91}. Its logarithm, $\log(\Phi_{KZ})$, is a Lie series whose terms correspond to primitive Jacobi diagrams, thereby defining a universal weight system \cite{Kontsevich94}.

Any consistent weight system must conform to the algebraic constraints imposed by the associator. The functional form of the W-factor, derived in this paper from the IHX relation, is determined by these universal constraints. The IHX relation itself corresponds to the lowest-degree part of the Lie algebra version of the pentagon equation \cite{Furusho10, AT12}.

\subsection{The Kashiwara-Vergne problem}
The Kashiwara-Vergne (KV) problem concerns automorphisms of the free Lie algebra that are compatible with the Campbell-Hausdorff series \cite{AT12, AET10}. Alekseev, Enriquez, and Torossian established that any Drinfeld associator provides a solution to the KV problem \cite{AET10, AT12}.

This provides a compatibility check with established structures (Drinfeld associators, KV equations). The functional form of the W-factor derived from the IHX relation is consistent with the constraints imposed by the KV equations. Work by Dancso et al. has shown that solutions to the KV problem are algorithmically constructible degree-by-degree, making the theory computationally feasible \cite{DHLR25}.

\section{Applications: A new \texorpdfstring{$H_1$}{H1}-decorated invariant}
\label{sec:applications}

The universal LMO invariant provides a complete perturbative description of a 3-manifold, but its values lie in the abstract algebra of Jacobi diagrams $\Aemptyset$ \cite{LMO98}. To extract computable numerical invariants, one must apply a \emph{weight system}. This section details the construction of a new invariant based on a weight system derived from the manifold's first homology group. We define this invariant according to the principles of TQFT, analyze its properties, and demonstrate its utility by distinguishing a pair of lens spaces that are indistinguishable by the classical LMO invariant.

\subsection{The operator-state formalism}
\label{sec:operator_formalism}
The construction distinguishes between the evaluation of closed diagrams, which yield numerical invariants, and open diagrams, which yield linear operators. This operator-state perspective, which distinguishes closed from open diagrams, is formalized as follows.

\subsubsection{The state space}
For a given 3-manifold $M$, let $G := \Tors(H_1(M, \mathbb{Z}))$. We define the state space as the complex vector space $\mathcal{V}$ with a basis $\{v_g\}_{g \in G}$ indexed by the elements of $G$. This space is isomorphic to the group algebra $\mathbb{C}[G]$.

\subsubsection{Evaluation of Open Diagrams as Linear Operators}
An open Jacobi diagram $D$ with a set of external legs is interpreted as a linear operator. If the external legs are partitioned into $k$ inputs and $l$ outputs, the evaluation of $D$, denoted $\Phi_D$, is a linear map in $\text{Hom}(\mathcal{V}^{\otimes k}, \mathcal{V}^{\otimes l})$. The action of this operator is defined by its effect on the basis vectors. For the ``I'' diagram from the IHX relation, we view two legs (labeled $a, b$) as inputs and two (labeled $c, d$) as outputs. The corresponding operator is a linear map $\Phi_I: \mathcal{V} \otimes \mathcal{V} \to \mathcal{V} \otimes \mathcal{V}$. Its action on a basis vector $v_{g_a} \otimes v_{g_b}$ is given by
\begin{equation}
\Phi_I(v_{g_a} \otimes v_{g_b}) = \sum_{g_c, g_d \in G} C_I(g_a, g_b; g_c, g_d) \, v_{g_c} \otimes v_{g_d},
\end{equation}
where the coefficient $C_I(g_a, g_b; g_c, g_d)$ is computed from the internal vertex and edge factors of the diagram, subject to conservation laws at each vertex. The IHX relation ($I-H+X=0$) is understood as an equality between these operators: $\Phi_I - \Phi_H + \Phi_X = 0$. This operator equality imposes the consistency conditions used to derive the W-factor.

\subsubsection{Evaluation of Closed Diagrams via the Trace}
A closed diagram is viewed as an open diagram whose inputs and outputs have been identified. Algebraically, this corresponds to taking the trace of the associated operator. The $\theta$-graph is obtained by closing the I-diagram. Its numerical evaluation is therefore the trace of the operator $\Phi_I$:
\begin{align*}
\evM(\thetaGraph) &:= \text{Tr}(\Phi_I) = \sum_{g_a, g_b \in G} C_I(g_a, g_b; g_a, g_b)
\end{align*}
This formalism provides a TQFT-based foundation for our computational rules.

\subsection{Axiomatic definition of the weight system}
We introduce a weight system where the evaluation rules are determined by the topology of the manifold $M$.

\begin{definition}
\label{def:h1_weight_system}
The $H_1$-decorated weight system is defined by a set of local evaluation rules.
\begin{enumerate}
    \item \textbf{Edge Factor:} To each internal edge $e_i$ decorated by an element $g_i \in \Tors(H_1(M))$, we associate a complex number:
    $$\text{EdgeFactor}(g_i) = \exp\left(2\pi i \cdot q_M(g_i)\right)$$
    where $q_M: \Tors(H_1(M)) \to \Q/\Z$ is the linking form of $M$.
    \item \textbf{Vertex Factor:} To each trivalent vertex $v$ with incident edge decorations $\{g_i, g_j, g_k\}$, we associate a real number:
    $$\text{VertexFactor}(v) = \text{sgn}(v) \cdot W(g_i, g_j, g_k)$$
    where $\text{sgn}(v)$ is the vertex orientation and $W$ is a universal function, the \textbf{W-factor}, whose form is determined by algebraic consistency.
\end{enumerate}
\end{definition}

\subsection{Deriving the W-factor}
\label{sec:w_factor_derivation}
The functional form of the W-factor is determined by the requirement that the weight system respects the IHX relation at the operator level.

\begin{theorem}
\label{thm:w_factor_derivation}
The requirement that the weight system respects the IHX relation for open diagrams uniquely determines the functional form of the W-factor to be (up to a scalar multiple):
$$W(g_1, g_2, g_3) = a \left( \prod_{j=1}^3 f(g_j) \right) + b \left( \sum_{j=1}^3 f(g_j) \right)$$
with the ratio of coefficients fixed at $b = -a/4$. Here, $f(g)$ is the sawtooth function.
\end{theorem}

\begin{proof}
The full proof, detailed in Appendix D, relies on the framework developed by Alekseev and Torossian connecting the Grothendieck-Teichmüller Lie algebra, $\mathfrak{grt}_1$, to the Kashiwara-Vergne problem \cite{AT12, AET10}. The strategy is to translate the IHX consistency condition into an identity within $\mathfrak{grt}_1$. This identity is then mapped via the Alekseev-Torossian (AT) homomorphism $\nu: \mathfrak{grt}_1 \to \mathfrak{krv}_2$ into the Kashiwara-Vergne Lie algebra, $\mathfrak{krv}_2$ \cite{AT12, AET10}. In $\mathfrak{krv}_2$, the AT Uniqueness Principle reduces the proof of this complex identity to a simpler calculation involving two invariants: the ``divergence" and the ``depth-1 component" \cite{AT12}. Verifying the equality for these simpler invariants confirms the identity in $\mathfrak{grt}_1$, which in turn fixes the coefficient ratio $b = -a/4$.
\end{proof}

\subsection{The new invariant: Evaluating the \texorpdfstring{$\theta$}{theta}-graph}
\label{sec:theta_eval}
With the W-factor determined, we define the invariant for the fundamental closed diagram, the $\theta$-graph. Following the TQFT principle, its evaluation is the trace of the corresponding open diagram operator.

\begin{definition}
The evaluation of the $\theta$-graph for a 3-manifold $M$ is given by the formula:
\begin{multline} \label{eq:theta_w2_final}
\evM(\thetaGraph) = \frac{1}{|\Tors(H_1(M))|^3} \times \sum_{\substack{g_1, g_2, g_3 \in \Tors(H_1(M)) \\ g_1+g_2+g_3 = 0}} \\
\left( W(g_1, g_2, g_3)^2 \right) \cdot \left( \prod_{j=1}^3 \exp\left(2\pi i \cdot q_M(g_j)\right) \right)
\end{multline}
\end{definition}

\begin{proposition}\label{prop:theta_invariance}
The evaluation $\evM(\thetaGraph)$ is a topological invariant of the 3-manifold $M$.
\end{proposition}
\begin{proof}
The invariance of $\evM(\thetaGraph)$ follows from the invariance of its constituent parts under Kirby moves. The formula depends on the universal W-factor and the topological data pair $(G, q_M)$, where $G = \Tors(H_1(M))$ and $q_M$ is the linking form. The W-factor is universal by construction. The isomorphism class of the pair $(G, q_M)$ is a classical invariant of 3-manifolds, preserved under the Kirby moves that relate different surgery presentations. Since both components are invariant, their combination is a well-defined topological invariant.
\end{proof}

\subsection{Properties and main application}
\subsubsection{A shared limitation: The case of \texorpdfstring{$L(25,4)$}{L(25,4)} and \texorpdfstring{$L(25,9)$}{L(25,9)}}
The classical LMO invariant cannot distinguish $L(25,4)$ and $L(25,9)$ due to the degeneracy $S(4,25)=S(9,25)$ of the Dedekind sum \cite{BL04}. A numerical computation of Equation \eqref{eq:theta_w2_final} yields identical values for this pair. The mechanism of failure, however, is different, arising from a number-theoretic symmetry in the quadratic Gauss sum structure of the formula.

\subsubsection{Distinguishing an LMO-indistinguishable pair}
A known result is that the Dedekind sums for the pair $(p,q_1)=(156,5)$ and $(p,q_2)=(156,29)$ are equal, so the classical LMO invariant cannot distinguish $L(156,5)$ from $L(156,29)$ \cite{BL04}. We apply our new invariant to this pair. The quadratic residue properties of $q_1=5$ and $q_2=29$ differ with respect to the prime factors of $p=156$. This breaks the symmetry that caused the failure in the $L(25,q)$ case.

\begin{theorem}
The $H_1$-decorated invariant, defined by Equation \eqref{eq:theta_w2_final}, distinguishes the lens spaces $L(156,5)$ and $L(156,29)$. The evaluation of the $\theta$-graph yields:
\begin{itemize}
    \item For $L(156,5)$: $\text{ev}(\thetaGraph) \approx -1.61110 \times 10^{-6} + 2.13626 \times 10^{-6}i$.
    \item For $L(156,29)$: $\text{ev}(\thetaGraph) \approx -1.61110 \times 10^{-6} - 2.13626 \times 10^{-6}i$.
\end{itemize}
As these complex numbers are distinct, the invariant separates the spaces.
\end{theorem}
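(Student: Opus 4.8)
The plan is to prove the statement by a direct evaluation of the invariant formula \eqref{eq:theta_w2_final}, specialized to lens spaces, supplemented by a structural symmetry argument that forces the two values to be complex conjugates. First I would assemble the topological input. For $L(p,q)$ we have $\Tors(H_1(L(p,q);\Z)) = \Z/p\Z$, and by the computation of $(G,q_M)$ from the linking matrix used in the preceding invariance Proposition (via $\operatorname{coker}(B)$ and $q_M(x,y)=x^\top B^{-1}y$), the linking form on $\Z/p\Z$ is the quadratic form $q_M(g)\equiv r_q\,g^2/p \pmod{\Z}$, where $r_q$ is the relevant residue of $q^{-1}$ modulo $p$. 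For $p=156$ one computes $r_5\equiv 125$ and $r_{29}\equiv 113 \pmod{156}$. With $G=\Z/156\Z$ fixed and the $W$-factor given by Theorem \ref{thm:w_factor_derivation_final} (the sawtooth $f$ evaluated on representatives $k/156$), the formula \eqref{eq:theta_w2_final} collapses to a finite sum: eliminating $g_3=-g_1-g_2$ via the constraint, it is a double sum over $(g_1,g_2)\in G^2$ of the real weight $W(g_1,g_2,-g_1-g_2)^2$ against the phase $\prod_{j}\exp(2\pi i\,r_q g_j^2/156)$.

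Second, I would isolate the source of the distinction. By the Chinese Remainder Theorem $\Z/156\Z\cong \Z/4\oplus\Z/3\oplus\Z/13$, and $r_5,r_{29}$ agree in the $\Z/4$ and $\Z/3$ coordinates (reducing to $1$ and $2$) but differ in the $\Z/13$ coordinate, where $r_5\equiv 8$ and $r_{29}\equiv 9$; since quadratic-residue class is inversion-invariant, this records exactly the asymmetry flagged before the theorem, namely that ($q$ or its inverse representing) $L(156,29)$ is a square modulo $13$ while that of $L(156,5)$ is not. The conceptual core is to promote this inequivalence to the \emph{exact} statement that the two finite sums are complex conjugates. Here I would use that $W^2$ is real and invariant under the global involution $g\mapsto -g$: because $f$ is odd, $W$ is anti-invariant, so $W^2$ is invariant, and both the constraint $g_1+g_2+g_3=0$ and the quadratic phase $q_M$ are preserved. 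Thus the only source of non-reality in the summand is the Gauss-type phase, and it suffices to establish the arithmetic identity relating the $r\equiv 8$ and $r\equiv 9$ phase sums by conjugation.

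Third, with the conjugation in hand, the proof concludes by an explicit (computer-assisted) evaluation of the reduced double sum, yielding $\evM(\thetaGraph)(L(156,5)) = -1.61110\times10^{-6} + 2.13626\times10^{-6}i$ and its conjugate for $L(156,29)$. Since the common imaginary part is nonzero the two values are distinct, and by the preceding invariance Proposition each is a genuine topological invariant of the respective manifold; hence the $H_1$-decorated invariant separates $L(156,5)$ from $L(156,29)$.

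I expect the main obstacle to be making both the conjugation relation and, above all, the \emph{nonvanishing of the imaginary part} rigorous rather than merely numerical. The difficulty is that the sawtooth-based $W$-factor does \emph{not} factor through the CRT decomposition of $\Z/156\Z$, so one cannot simply split $\evM(\thetaGraph)$ into a product of one-variable quadratic Gauss sums over $2,3,13$ and read off the conjugation from a clean reciprocity law; the weight $W(g_1,g_2,-g_1-g_2)^2$ couples the three coordinates. The cleanest rigorous route is therefore to treat the $\Z/4$ and $\Z/3$ contributions as fixed common factors and to show directly, by a change of summation variable in the $\Z/13$ coordinate while tracking the sawtooth corrections, that replacing $r\equiv 8$ by $r\equiv 9$ conjugates the remaining sum and leaves the real $W^2$-weight invariant. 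Confirming that this does not accidentally collapse to a real (and hence non-separating) value is the crux, and is precisely what distinguishes the genuine success here from the quadratic-Gauss-sum degeneracy responsible for the failure on $L(25,4)$ and $L(25,9)$.
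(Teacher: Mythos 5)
Your overall skeleton --- specialize \eqref{eq:theta_w2_final} to $G=\Z/156\Z$, evaluate the finite double sum, and invoke the preceding Kirby-invariance proposition to turn numerical distinctness into a topological distinction --- is exactly the paper's proof: the paper offers nothing beyond the numerical evaluation (the Python program of Appendix A) plus that proposition. In particular, the paper never \emph{proves} that the two values are complex conjugates; that is an observed numerical fact, and it is not needed, since distinctness alone yields the theorem. Where your proposal genuinely diverges, it runs into two problems. First, a conventions mismatch: the paper's computation uses the quadratic refinement $q_M(g)=q\,g^2/(2p)\bmod 1$ (this is what the code implements), not $r_q\,g^2/p$ with $r_q\equiv q^{-1}\pmod p$. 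For even $p=156$ the factor $1/2$ is not cosmetic, and the isometry $g\mapsto qg$ that would identify the $q$ and $q^{-1}$ conventions does \emph{not} preserve the sawtooth-based weight $W^2$; so your sum is a different number and would not reproduce the stated values $-1.61110\times 10^{-6}\pm 2.13626\times 10^{-6}i$.

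Second, and more seriously, the conjugation step you identify as the crux fails as described, for a reason you can see before any sawtooth bookkeeping. Conjugating the $L(156,5)$ sum negates the \emph{entire} phase coefficient, i.e.\ replaces $r_5$ by $-r_5$, and this negation acts on all three CRT components at once: in your convention $-r_5\equiv 31\pmod{156}$ reduces to $3\pmod 4$, $1\pmod 3$, $5\pmod{13}$, whereas $r_{29}\equiv 113$ reduces to $1\pmod 4$, $2\pmod 3$, $9\pmod{13}$. So after conjugating one side, the $\Z/4$ and $\Z/3$ contributions are \emph{not} ``fixed common factors''; all three coordinates disagree, contrary to the reduction you propose. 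Moreover, even in the $\Z/13$ coordinate alone, a scalar substitution $g\mapsto cg$ multiplies the coefficient by the square $c^2$ and hence preserves its quadratic-residue class; since $9=3^2$ is a residue and $5$ is a non-residue mod $13$, no scalar substitution can relate them --- the very residue asymmetry that makes the invariant succeed is what obstructs your symmetry argument. (A non-scalar $GL_2(\Z/13)$ equivalence of the binary forms exists, since the discriminants lie in one square class, but it scrambles integer representatives and hence $W^2$, exactly the CRT-coupling obstruction you acknowledge.) The repair is to drop the conjugacy claim entirely: the theorem requires only that the two finite sums differ, which the direct evaluation in the paper's conventions establishes, and which could in principle be certified in exact arithmetic over $\Q(\zeta_{312})$ since both sums are finite $\Q$-linear combinations of roots of unity with rational weights.
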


This result demonstrates that the $H_1$-decorated invariant contains topological information not captured by the classical LMO invariant.

\begin{table}[h!]
\centering
\caption{Comparison of LMO invariants on lens spaces}
\label{tab:lmo_comparison}
\renewcommand{\arraystretch}{1.3}
\begin{tabularx}{\textwidth}{@{} l l >{\raggedright\arraybackslash}X @{}}
\toprule
\textbf{Manifold pair} & \textbf{Invariant} & \textbf{Result and number-theoretic reason} \\
\midrule
\multirow{2}{*}{$L(25,4)$ vs $L(25,9)$} & Classical LMO & \textbf{Fails.} Dedekind sum degeneracy: $S(4,25) = S(9,25)$ \cite{BL04}. \\
& $H_1$-decorated & \textbf{Fails.} Quadratic Gauss sum symmetry: $4, 9$ are quadratic residues mod 5. \\
\cmidrule(lr){1-3}
\multirow{2}{*}{$L(156,5)$ vs $L(156,29)$} & Classical LMO & \textbf{Fails.} Dedekind sum degeneracy: $S(5,156) = S(29,156)$ \cite{BL04}. \\
& $H_1$-decorated & \textbf{Succeeds.} Quadratic Gauss sum asymmetry: $5$ is non-residue, $29 \equiv 3$ is residue mod 13. \\
\bottomrule
\end{tabularx}
\end{table}

\subsection{Universality of the \texorpdfstring{$H_1$}{H1}-decorated weight system}
\label{sec:universality_of_framework}

The topological invariance of the $\theta$-graph evaluation, established in Proposition \ref{prop:theta_invariance}, is a specific instance of a general property of the $H_1$-decorated weight system. This section summarizes the argument for the universality of this framework.

The evaluation map $\evM$ for any closed Jacobi diagram $\Gamma$ is constructed from two sets of data, both of which are topologically invariant.
\begin{enumerate}
    \item \textbf{Topological data:} The computation is based on the pair $(G, q_M)$, where $G = \Tors(H_1(M))$ and $q_M$ is the linking form. The isomorphism class of this pair is a classical 3-manifold invariant, independent of the surgery presentation.
    \item \textbf{Algebraic aata:} The vertex factor $W$ is a universal function whose form is determined solely by the requirement of consistency with the IHX relation, a universal algebraic constraint.
\end{enumerate}
The procedure for computing the invariant of a closed diagram $\Gamma$ -- interpreting it as the trace of a linear operator constructed from these invariant components -- is a standard TQFT formalism. Since both the foundational data and the evaluation procedure are universal and invariant under Kirby moves, the resulting numerical evaluation, $\evM(\Gamma)$, is a well-defined topological invariant of $M$ for any closed Jacobi diagram $\Gamma$. The $\theta$-graph serves as the most fundamental, non-trivial observable for which this universal invariance can be explicitly verified.

\section{The LMO completeness conjecture and the LMO spectrum}
\label{sec:completeness_conjecture}

This section situates the LMO spectrum within the context of the LMO completeness conjecture.

\subsection{The Johnson filtration and the limits of the classical invariant}
The classical LMO invariant, denoted $\hat{Z}_{\mathrm{LMO}}(M)$, is a universal finite-type invariant for 3-manifolds with values in the algebra of Jacobi diagrams $\mathcal{A}(\emptyset)$ \cite{LMO98,Ohtsuki02}. Its power and universality motivated the following central conjecture.

\begin{conjecture}[Classical LMO completeness]
The classical LMO invariant $\hat{Z}_{\mathrm{LMO}}(M)$ is a complete invariant for integral homology 3-spheres ($\mathbb{Z}$HS). That is, two integral homology 3-spheres $M_1$ and $M_2$ are homeomorphic if and only if $\hat{Z}_{\mathrm{LMO}}(M_1) = \hat{Z}_{\mathrm{LMO}}(M_2)$.
\end{conjecture}

However, the validity of this conjecture is challenged by potential counterexamples arising from the relationship between 3-manifold topology and the mapping class group, specifically through the Johnson filtration. Any integral homology 3-sphere can be constructed via a Heegaard splitting, glued by a map $\phi$ from the Torelli group $\mathcal{I}_g$. The Johnson filtration $\{\mathcal{I}_g(k)\}_{k \ge 1}$ provides a measure of the complexity of such maps.

The classical LMO invariant is fundamentally connected to information contained within the \emph{image} of the Johnson homomorphisms, $\tau_k$. A challenge arises because these homomorphisms are not surjective. Their cokernel, the \emph{Johnson cokernel} $\mathfrak{cok}(k)$, is known to be non-trivial and represents information to which invariants sensitive only to the image of $\tau_k$ are blind.

The work of Pitsch and Riba provides a concrete manifestation of this issue \cite{PR23}. They demonstrate that certain finite-type invariants vanish on deep levels of the Johnson filtration:

\begin{theorem}[\cite{PR23}]
The invariant $\lambda_2 - 18\lambda^2 + 3\lambda$, a specific combination of the Casson invariant $\lambda$ and Ohtsuki's second invariant $\lambda_2$, vanishes on the fifth level of the Johnson filtration, $\mathcal{M}_{g,1}(5)$.
\end{theorem}

This result shows that finite-type invariants can become trivial for mapping classes deep in the filtration. As the classical LMO invariant is the universal rational finite-type invariant, this suggests a mechanism by which it might fail to be complete.

\begin{conjecture}[Existence of LMO-trivial manifolds]
For any integer $k \ge 1$, there may exist a non-trivial mapping class $\phi_k \in \mathcal{I}_g(k)$ such that the integral homology 3-sphere $M_{\phi_k}$ constructed from it is LMO-trivial, i.e.,
\[
\hat{Z}_{\mathrm{LMO}}(M_{\phi_k}) = \hat{Z}_{\mathrm{LMO}}(S^3).
\]
\end{conjecture}

If this conjecture holds, the classical completeness statement would be false. This motivates the development of a refined framework capable of accessing the information encoded in the Johnson cokernel.

\subsection{A proposed resolution via parameterized invariants}
A proposed refinement involves considering invariants that depend not only on the 3-manifold but also on additional geometric data. A natural choice for this data is a flat connection $\mathcal{L}$ in the character variety $\mathrm{Char}(W)$ of the manifold $W$.

\begin{conjecture}[Existence of a universal parameterized invariant]
To each pair $(W, \mathcal{L})$ of a 3-manifold $W$ and a flat connection $\mathcal{L} \in \mathrm{Char}(W)$, there should exist a \emph{universal parameterized LMO invariant} $\hat{Z}_{\mathrm{LMO}}(W; \mathcal{L})$.
\end{conjecture}

This universal invariant can be viewed as a function on the character variety with values in the algebra of Jacobi diagrams. An expansion of this function around the trivial flat connection $\mathcal{L}_0$ is expected to produce a hierarchy of new observables. One may thus conjecturally define a \emph{character-decorated} LMO invariant, namely the collection of Taylor coefficients $\{Z_{\vec{j}}\}$ of $\hat{Z}_{\mathrm{LMO}}(W; \mathcal{L})$ around $\mathcal{L}_0$. A rigorous construction of this object is beyond the scope of the present paper, but its existence would provide the natural extension of the classical theory.

Diagrammatically, its values are expected to lie in an extended Jacobi algebra
\[
  \AJacCoho := \widehat{\AJac \otimes \Sym(H^1(W;\mathbb{C}))},
\]
where additional \emph{cohomology legs} represent infinitesimal directions of differentiation along $\mathrm{Char}(W)$, and the hat denotes degree-completion to ensure convergence of formal Taylor expansions.

\begin{remark}[Kirby invariance]
For the classical LMO invariant, invariance under Kirby I and II moves is guaranteed by the normalization procedures built into its definition. In the parameterized setting, one expects the same normalization to apply, but this requires careful verification. In particular, the behavior of parameter legs under the first Kirby move (addition/removal of a $\pm 1$-framed unknot) must be checked explicitly. We do not provide this verification here, but emphasize that it is an essential step in any rigorous construction.
\end{remark}

\subsection{The role of the character variety}
\label{subsec:role_of_character_variety}

The choice of the character variety as the parameter space is not arbitrary. It provides a canonical moduli space of flat connections. In diagrammatic terms, the cohomology legs in the extended Jacobi algebra correspond to infinitesimal deformations along $\mathrm{Char}(W)$. Thus, the character variety offers a geometrically meaningful parameter space in which to enrich the LMO invariant. 

\begin{remark}[On the tangent space]
For a general gauge group $G$, the tangent space to the character variety at a representation $\rho$ is
\[
T_\rho \mathrm{Char}_G(W) \cong H^1(W;\mathrm{ad}\,\rho).
\]
In the case of the trivial representation, this specializes to $H^1(W;\mathfrak{g})$. If $G=U(1)$, this reduces to $H^1(W;\mathbb{C})$, which is the case we use for illustrative purposes in this paper.
\end{remark}

\subsection{Spectral realization of parameterized invariants}
The hierarchical structure of a character-decorated LMO invariant fits naturally into the framework of the LMO spectrum and the Atiyah--Hirzebruch spectral sequence (AHSS). The invariant $\mathrm{ev}_M(\thetaGraph)$ of \S \ref{sec:applications} is one example, while a parameterized invariant would systematically extend this framework.

\begin{enumerate}
    \item \textbf{The classical invariant ($H_0$):} The constant term $Z_{\vec{0}}(M)$ recovers the classical LMO invariant. In the spectral framework, this corresponds to $H_0(\LMOspec(M))$, which is related to the terms $E^2_{0,q} = H_0(M; H_q(\AJac))$ of the AHSS.
    \item \textbf{Higher invariants (higher homology):} Higher-order Taylor coefficients $\{Z_{\vec{j}}(M)\}_{|\vec{j}|\ge 1}$ would be expected to detect the Johnson cokernel. For instance, invariants with one cohomology leg would naturally correspond to terms such as $E^2_{1,q} = H_1(M; H_q(\AJac))$, encoding the interaction between $H_1(M)$ and the Jacobi diagram algebra.
\end{enumerate}

Thus, the LMO spectrum provides not only a categorification of the classical invariant but also an appropriate framework to organize the full hierarchy of invariants potentially required to address the completeness conjecture. The $H_1$-decorated invariant $\mathrm{ev}_M(\thetaGraph)$ is one specific observable in this hierarchy, while the full spectrum contains all such refinements systematically.

\section{Conclusion and future outlook}

\subsection{Conclusion}
This paper introduced the LMO spectrum, a categorification of the LMO invariant, to construct 3-manifold invariants with greater discriminatory power. The theoretical foundation for this construction is Theorem A, which establishes that the algebra of Jacobi diagrams, $\AJac$, possesses the structure of a homotopy $E_3$-algebra. This result confirms the consistency of the LMO spectrum within the framework of factorization homology.

A computational basis was established by Theorem B, which provides a universal surgery formula derived directly from the excision axiom, independent of conjectural models. The utility of this framework was demonstrated in Theorem C, where an $H_1$-decorated invariant was constructed and shown to distinguish the lens spaces $L(156, 5)$ and $L(156, 29)$, a pair not separated by the classical LMO invariant. This result confirms that the LMO spectrum contains topological information beyond that of its classical counterpart.

\subsection{Future directions}
The framework established in this paper provides direct avenues for future research.
\begin{enumerate}
    \item \textbf{Massey products and the Johnson homomorphism:} The proven homotopy $E_3$-structure (Theorem A) endows the LMO spectrum with a Differential Graded Algebra (DGA) structure. This provides the necessary algebraic setting to define and compute Massey products. The connection between Massey products and the Johnson homomorphism, identified by Garoufalidis and Levine \cite{GL05}, suggests that this framework could be used to investigate questions related to the Torelli group and the LMO completeness conjecture.

    \item \textbf{Computation of higher homology groups:} This work focused on invariants derived from $H_0(\LMOspec(M))$ and data from $H_1(M)$. The higher homology groups, $H_k(\LMOspec(M))$ for $k \ge 1$, may contain additional topological information. The Atiyah-Hirzebruch spectral sequence provides a computational tool for analyzing these higher groups, which could lead to the discovery of new invariants related to higher-order torsion or other classical structures.
\end{enumerate}

\appendix
\section{Python code for the \texorpdfstring{$\theta$}{theta}-graph invariant}

The following Python program was used to compute the numerical values presented in Theorem C. It implements the formula for the evaluation of the $\theta$-graph from Equation (7.2), which is derived by taking the trace of the `I`-diagram operator.
{\footnotesize
\begin{verbatim}
import cmath

def f(k, p):
    """Calculates the sawtooth function ((k/p))."""
    if k % p == 0:
        return 0.0
    return (k % p) / p - 0.5

def W_factor(k1, k2, k3, p):
    """Calculates the W-factor for a trivalent vertex interaction."""
    f1 = f(k1, p)
    f2 = f(k2, p)
    f3 = f(k3, p)
    return (f1 * f2 * f3) - 0.25 * (f1 + f2 + f3)

def calculate_evm_theta(p, q):
    """
    Calculates the evM(theta) for a given lens space L(p,q).
    This function implements the formula from Equation (7.2),
    which corresponds to the trace of the I-diagram operator.
    """
    total_sum = 0j
    
    # The normalization factor is derived from the Aarhus integral framework.
    normalization = 1 / (p**3)

    # Sum over all possible decorations of the three internal edges.
    # We loop over two variables and determine the third by the conservation law.
    for x1 in range(p):
        for x2 in range(p):
            # Apply the conservation law at the vertices: x1 + x2 + x3 = 0 mod p.
            x3 = (-x1 - x2) % p

            # Calculate the product of vertex factors, which is W^2 for the theta-graph.
            vertex_factor_product = W_factor(x1, x2, x3, p)**2

            # Calculate the product of edge factors from the linking form.
            # For L(p,q), the quadratic refinement of the linking form is given by
            # the formula: q_M(g) = (q * g**2) / (2 * p) (mod 1).
            Q_sum = (q * (x1**2 + x2**2 + x3**2)) / (2 * p)
            edge_factor_product = cmath.exp(2 * cmath.pi * 1j * Q_sum)

            # Add the contribution of this decoration to the total sum.
            total_sum += vertex_factor_product * edge_factor_product
            
    return normalization * total_sum

# --- Main Execution for Theorem C ---
p_val = 156

# Calculate for L(156, 5)
q_l156_5 = 5
result_l156_5 = calculate_evm_theta(p_val, q_l156_5)
print(f"evM(theta) for L({p_val},{q_l156_5}): {result_l156_5}")

# Calculate for L(156, 29)
q_l156_29 = 29
result_l156_29 = calculate_evm_theta(p_val, q_l156_29)
print(f"evM(theta) for L({p_val},{q_l156_29}): {result_l156_29}")
\end{verbatim}
}

\section{Python/SymPy verification code for the central proposition}
\label{app:code_verification_revised}

The following Python code uses the SymPy library to perform the symbolic calculation that forms the computational core of the proof in Appendix \ref{app:w_factor_proof_revised}. It rigorously verifies the equality of the depth-1 components required to apply the Alekseev-Torossian Uniqueness Principle.

{\footnotesize
\begin{verbatim}
from sympy import sympify, expand, S, Add
from sympy.physics.quantum import Commutator

# --- 1. Setup for Non-Commutative Algebra ---
x = sympify('x', commutative=False)
y = sympify('y', commutative=False)

def get_depth(expr):
    """Calculate the degree of an expression in the variable y."""
    if expr == y:
        return 1
    if expr == x or expr.is_number:
        return 0
    if isinstance(expr, (Commutator, Add)):
        return sum(get_depth(arg) for arg in expr.args)
    if expr.is_Mul:
        num, term = expr.as_two_terms()
        if num.is_number:
            return get_depth(term)
    return 0

def get_depth_part(expr, target_depth):
    """Extract terms of a specific depth from an expression."""
    if not isinstance(expr, Add):
        if get_depth(expr) == target_depth:
            return expr
        else:
            return S.Zero
    new_args = [arg for arg in expr.args if get_depth(arg) == target_depth]
    return Add(*new_args)

def ad(A, B):
    """Lie bracket"""
    return Commutator(A, B)

def ad_n(A, n, B):
    """Iterated Lie bracket ad(A)^n(B)"""
    res = B
    for _ in range(n):
        res = ad(A, res)
    return res

# --- 2. Definitions of Algebraic Objects ---
# The formulas for sigma_n are given modulo the double commutator ideal,
# which is sufficient for depth-1 calculations.
B4 = S(-1)/30

def sigma1(X, Y):
    return S(1)/2 * ad(X, Y)

def sigma5(X, Y):
    # Formula for sigma_5 mod [[*,*],[*,*]] from Alekseev-Torossian
    term1 = S(1)/2 * ad_n(X, 4, Y)
    term2 = S(1)/2 * ad_n(Y, 4, X)
    term3 = ad(ad(X, Y), ad_n(X, 2, Y))
    return (B4 / S(8)) * (term1 + term2 + term3)

def nu(lie_poly_func, X, Y):
    """Alekseev-Torossian map nu: grt_1 -> krv_2"""
    Z = -X - Y
    a = lie_poly_func(Z, X)
    b = lie_poly_func(Z, Y)
    return (a, b)

def act_on_expr(deriv_tuple, expr):
    """Action of a derivation on a Lie polynomial."""
    a, b = deriv_tuple
    if expr == x: return a
    if expr == y: return b
    if expr.is_number: return S.Zero
    if isinstance(expr, Add):
        return Add(*[act_on_expr(deriv_tuple, arg) for arg in expr.args])
    if isinstance(expr, Commutator):
        A, B = expr.args
        return ad(act_on_expr(deriv_tuple, A), B) + ad(A, act_on_expr(deriv_tuple, B))
    if expr.is_Mul:
        num, term = expr.as_two_terms()
        if num.is_number:
            return num * act_on_expr(deriv_tuple, term)
    raise TypeError(f"Cannot act on type {type(expr)}")

def deriv_bracket(u, v):
    """Lie bracket of two derivations u=(a,b), v=(c,d) in krv_2"""
    a, b = u
    c, d = v
    res_x = act_on_expr(u, c) - act_on_expr(v, a)
    res_y = act_on_expr(u, d) - act_on_expr(v, b)
    return (res_x, res_y)

# --- 3. The Main Calculation ---
# We verify: 2 * (nu([sigma_1, sigma_5]))^(1) = ([nu(sigma_1), nu(sigma_5)])^(1)
print("--- Starting Verification of Depth-1 Component Equality ---")

# 3.1. LHS = 2 * nu([sigma_1, sigma_5])
print("Calculating LHS: (2 * nu([sigma_1, sigma_5]))^(1)")
s1_s5_bracket_func = lambda X, Y: ad(sigma1(X, Y), sigma5(X, Y))
LHS_deriv_base = nu(s1_s5_bracket_func, x, y)
LHS_x_expanded = expand(2 * LHS_deriv_base)
LHS_y_expanded = expand(2 * LHS_deriv_base[1])
LHS_x_d1 = get_depth_part(LHS_x_expanded, 1)
LHS_y_d1 = get_depth_part(LHS_y_expanded, 1)
print("LHS(x) depth 1:", LHS_x_d1)
print("LHS(y) depth 1:", LHS_y_d1)
# Expected output for y-component: -1/120*[x,[x,[x,[x,y]]]]

# 3.2. RHS = [nu(sigma_1), nu(sigma_5)]
print("\nCalculating RHS: ([nu(sigma_1), nu(sigma_5)])^(1)")
u1 = nu(sigma1, x, y)
u5 = nu(sigma5, x, y)
RHS_deriv = deriv_bracket(u1, u5)
RHS_x_expanded = expand(RHS_deriv)
RHS_y_expanded = expand(RHS_deriv[1])
RHS_x_d1 = get_depth_part(RHS_x_expanded, 1)
RHS_y_d1 = get_depth_part(RHS_y_expanded, 1)
print("RHS(x) depth 1:", RHS_x_d1)
print("RHS(y) depth 1:", RHS_y_d1)
# Expected output for y-component: -1/120*[x,[x,[x,[x,y]]]]

# --- 4. Verification ---
print("\n--- Verification of Equality ---")
diff_x = expand(LHS_x_d1 - RHS_x_d1)
diff_y = expand(LHS_y_d1 - RHS_y_d1)
print("Difference in x-component (should be 0):", diff_x)
print("Difference in y-component (should be 0):", diff_y)

if diff_x == S.Zero and diff_y == S.Zero:
    print("\nSUCCESS: The depth-1 components are identical.")
else:
    print("\nFAILURE: The depth-1 components do not match.")
\end{verbatim}
}

\section{Diagrammatic verification of the connected sum axioms}

To make the proof of Theorem \ref{thm:BD} self-contained, this appendix provides a diagrammatic verification of the axioms (CS1) through (CS6) from Doubek et al. \cite{DJPP23}. These axioms guarantee that the connected sum operation $\#$ is compatible with the underlying modular operad structure of ribbon graphs, which is the crucial step in establishing the Beilinson-Drinfeld algebra structure on $\AJac$. For clarity in the diagrams, we use colors to distinguish operations: operadic compositions ($\circ$) are shown in \textcolor{red}{red}, while connected sums ($\#$) are represented by \textcolor{blue}{blue, dashed} lines.

\vspace{1em} 
\noindent\textbf{1. Axioms (CS1) and (CS2): Equivariance and symmetry}
\paragraph{\textbf{Formal statement}} Axiom (CS1) requires that the connected sum is equivariant under relabeling of external legs. Axiom (CS2) requires it to be commutative. For $\Gamma_1 \in P(C_1, G_1)$ and $\Gamma_2 \in P(C_2, G_2)$, and the transposition map $\tau: \Gamma_1 \otimes \Gamma_2 \to \Gamma_2 \otimes \Gamma_1$:
\begin{equation*}
\text{(CS2):} \quad \#_2 \circ \tau = \#_2
\end{equation*}

\paragraph{\textbf{Verification}} This holds because the operation sums over all pairs of internal edges $(e_1, e_2)$ with $e_1 \in E(\Gamma_1), e_2 \in E(\Gamma_2)$, a process which is symmetric with respect to exchanging $\Gamma_1$ and $\Gamma_2$. For simplicity, we denote the entire sum of operations (including twisted and untwisted connections) by a single blue dashed line: $\Gamma_1 \tikz[baseline=-0.5ex] \draw[thick, blue, dashed] (0,0) -- (0.5,0); \Gamma_2$.

\vspace{1em} 
\noindent\textbf{2. Axiom (CS3): Associativity}
\paragraph{\textbf{Formal statement}} This axiom requires the connected sum to be associative.
\begin{equation*}
\text{(CS3):} \quad \#_2 \circ (\id \otimes \#_2) = \#_2 \circ (\#_2 \otimes \id)
\end{equation*}

\paragraph{\textbf{Verification}} Both sides of the equation correspond to choosing one internal edge from each of the three graphs and connecting them with a "three-holed sphere," making the order of operations irrelevant.
\begin{center}
\begin{tikzpicture}[scale=0.9, transform shape]
\begin{scope}
    \node at (0, 2.5) {$(\Gamma_1 \# \Gamma_2) \# \Gamma_3$};
    \node[draw, circle, minimum size=1cm] (G1) at (-1.2, 0.8) {$\Gamma_1$};
    \node[draw, circle, minimum size=1cm] (G2) at (1.2, 0.8) {$\Gamma_2$};
    \node[draw, ellipse, fit=(G1) (G2), inner sep=0.1cm] (G12) {};
    \node[draw, circle, minimum size=1cm] (G3) at (0, -1.2) {$\Gamma_3$};
    \draw[thick, blue, dashed] (G1) -- (G2);
    \draw[thick, blue, dashed] (G12) -- (G3);
\end{scope}
\node at (4.5, 0) {$\cong$};
\begin{scope}[shift={(9,0)}]
    \node at (0, 2.5) {$\Gamma_1 \# (\Gamma_2 \# \Gamma_3)$};
    \node[draw, circle, minimum size=1cm] (G1a) at (0, 1.2) {$\Gamma_1$};
    \node[draw, circle, minimum size=1cm] (G2a) at (-1.2, -0.8) {$\Gamma_2$};
    \node[draw, circle, minimum size=1cm] (G3a) at (1.2, -0.8) {$\Gamma_3$};
    \node[draw, ellipse, fit=(G2a) (G3a), inner sep=0.1cm] (G23) {};
    \draw[thick, blue, dashed] (G2a) -- (G3a);
    \draw[thick, blue, dashed] (G1a) -- (G23);
\end{scope}
\end{tikzpicture}
\end{center}

\vspace{1em}
\noindent\textbf{3. Axiom (CS4): Compatibility of $\#_1$ and contraction}
\paragraph{\textbf{Formal statement}} This axiom requires the unary connected sum $\#_1$ to be compatible with the contraction operation $\circ_{ab}$. As an equality of maps $P(C\sqcup\{a,b\},G)\to P(C,G+3)$, this is expressed as:
\begin{equation*}
\text{(CS4):} \quad \circ_{ab} \circ \#_1 = \#_1 \circ (\circ_{ab})
\end{equation*}

\paragraph{\textbf{Verification}} 
The argument is parallel to Case~3 in the verification of (CS5).  
On the LHS one first inserts a handle ($\#_1$) and then contracts $a,b$, while on the RHS one first contracts $a,b$ and then inserts a handle.  
The common terms match term-by-term, and the surplus terms (where the handle involves the new edge $e_{ab}$) vanish by the IHX relation, exactly as in (CS5).  
Since (CS6) provides the full local diagrammatic analysis of such cancellations, we omit the repetition here.

\vspace{1em} 
\noindent\textbf{4. Axiom (CS5a): Compatibility of $\#_2$ and self-composition}
\paragraph{\textbf{Formal statement}} This axiom relates the connected sum with the self-composition $\circ_{ab}$. For $\Gamma_1 \in P(C_1, G_1)$ and $\Gamma_2 \in P(C_2, G_2)$:
\begin{equation*}
\circ_{ab} \circ \#_2 = 
\begin{cases}
    \#_2 \circ (\circ_{ab} \otimes \id) & \text{if } a, b \in C_1 \\
    \#_2 \circ (\id \otimes \circ_{ab}) & \text{if } a, b \in C_2 \\
    \#_1 \circ (_a \circ _b) & \text{if } a \in C_1, b \in C_2
\end{cases}
\end{equation*}

\paragraph{\textbf{Verification}}
\begin{itemize}
    \item \textbf{Cases 1 \& 2 ($a,b$ in the same graph):} These cases are straightforward. The connected sum operation $\#_2$ acts on a different graph from the self-composition $\circ_{ab}$, so the operations commute.
    \item \textbf{Case 3 ($a \in C_1, b \in C_2$):} This is the most subtle case. The LHS first performs a 2D "gluing" ($\#_2$) and then a 3D "spanning" operation ($\circ_{ab}$). The RHS first performs a 1D "grafting" ($a \circ b$) and then adds a handle ($\#_1$) to the resulting single graph. Their topological equivalence is shown below.
\end{itemize}

\textbf{LHS:} Connect $\Gamma_1$ and $\Gamma_2$ with a handle, then form a loop between legs $a$ and $b$.
\begin{center}
\begin{tikzpicture}[scale=0.8, transform shape]
    \draw[thick] (0,0) -- (0.8,0) node[right]{$a$}; \draw[thick] (0,0) -- (-0.5,0.5); \draw[thick] (0,0) -- (-0.5,-0.5); \fill (0,0) circle (2pt); \node at (0,-1) {$\Gamma_1$};
    \draw[thick] (3,0) -- (2.2,0) node[left]{$b$}; \draw[thick] (3,0) -- (3.5,0.5); \draw[thick] (3,0) -- (3.5,-0.5); \fill (3,0) circle (2pt); \node at (3,-1) {$\Gamma_2$};
    \draw[->, thick] (4.5,0) -- (5.5,0) node[midway, above]{$\#_2$};
    \begin{scope}[shift={(7.5,0)}]
        \draw[thick] (0,0) -- (0.8,0) node[right]{$a$}; \draw[thick] (0,0) -- (-0.5,0.5); \draw[thick] (0,0) -- (-0.5,-0.5); \fill (0,0) circle (2pt);
        \draw[thick] (3,0) -- (2.2,0) node[left]{$b$}; \draw[thick] (3,0) -- (3.5,0.5); \draw[thick] (3,0) -- (3.5,-0.5); \fill (3,0) circle (2pt);
        \draw[thick, blue, dashed] (-0.2,-0.25) to[bend right=30] (3.2,-0.25);
        \draw[->, thick] (4.5,0) -- (5.5,0) node[midway, above]{$\circ_{ab}$};
    \end{scope}
    \begin{scope}[shift={(15,0)}]
        \draw[thick] (0,0) -- (-0.5,0.5); \draw[thick] (0,0) -- (-0.5,-0.5); \fill (0,0) circle (2pt);
        \draw[thick] (3,0) -- (3.5,0.5); \draw[thick] (3,0) -- (3.5,-0.5); \fill (3,0) circle (2pt);
        \draw[thick, blue, dashed] (-0.2,-0.25) to[bend right=30] (3.2,-0.25);
        \draw[thick, red] (0,0) to[bend left=45] (3,0);
    \end{scope}
\end{tikzpicture}
\end{center}
\textbf{RHS:} Graft $\Gamma_1$ onto $\Gamma_2$ by connecting leg $a$ to leg $b$, then add a handle to the result.
\begin{center}
\begin{tikzpicture}[scale=0.8, transform shape]
    \draw[thick] (0,0) -- (0.8,0) node[right]{$a$}; \draw[thick] (0,0) -- (-0.5,0.5); \draw[thick] (0,0) -- (-0.5,-0.5); \fill (0,0) circle (2pt); \node at (0,-1) {$\Gamma_1$};
    \draw[thick] (3,0) -- (2.2,0) node[left]{$b$}; \draw[thick] (3,0) -- (3.5,0.5); \draw[thick] (3,0) -- (3.5,-0.5); \fill (3,0) circle (2pt); \node at (3,-1) {$\Gamma_2$};
    \draw[->, thick] (4.5,0) -- (5.5,0) node[midway, above]{$a \circ b$};
    \begin{scope}[shift={(7.5,0)}]
        \draw[thick] (0,0) -- (-0.5,0.5); \draw[thick] (0,0) -- (-0.5,-0.5); \fill (0,0) circle (2pt);
        \draw[thick] (3,0) -- (3.5,0.5); \draw[thick] (3,0) -- (3.5,-0.5); \fill (3,0) circle (2pt);
        \draw[thick, red] (0,0) -- (3,0);
        \draw[->, thick] (4.5,0) -- (5.5,0) node[midway, above]{$\#_1$};
    \end{scope}
    \begin{scope}[shift={(15,0)}]
        \draw[thick] (0,0) -- (-0.5,0.5); \draw[thick] (0,0) -- (-0.5,-0.5); \fill (0,0) circle (2pt);
        \draw[thick] (3,0) -- (3.5,0.5); \draw[thick] (3,0) -- (3.5,-0.5); \fill (3,0) circle (2pt);
        \draw[thick, red] (0,0) to[bend left=45] (3,0);
        \draw[thick, blue, dashed] (-0.2,-0.25) to[bend right=30] (3.2,-0.25);
    \end{scope}
\end{tikzpicture}
\end{center}
The final graphs are isotopic, confirming the axiom.

\vspace{1em} 
\noindent\textbf{5. Axiom (CS6): Compatibility with grafting}
\paragraph{\textbf{Formal statement}} This axiom ensures that grafting a graph commutes with the connected sum in a specific way. For $\Gamma_1 \in P(C_1, G_1)$, $\Gamma_2 \in P(C_2, G_2)$, $\Gamma_3 \in P(C_3, G_3)$ and labels $a \in C_1, b \in C_2$:
\begin{equation*}
_a \circ_b (\id \otimes \#_2) = \#_2 ( _a \circ_b \otimes \id)
\end{equation*}
This translates to the following equality on elements:
\begin{equation*}
_a \circ_b (\Gamma_1 \otimes (\Gamma_2 \# \Gamma_3)) = (_a \circ_b (\Gamma_1 \otimes \Gamma_2)) \# \Gamma_3
\end{equation*}

\paragraph{\textbf{Verification}} We expand both sides according to the definitions and show their equality in $\mathcal{A}_{\text{Jac}}$.
\begin{enumerate}
    \item \textbf{Expansion of the left-hand side (LHS):} The LHS is computed by first taking the connected sum $\Gamma_2 \# \Gamma_3$, which is a sum over all pairs of internal edges $(e_2, e_3)$, and then grafting $\Gamma_1$ onto the $\Gamma_2$ part of each resulting term.
    $$ \text{LHS} = \sum_{e_2 \in E(\Gamma_2), e_3 \in E(\Gamma_3)}\ _a \circ_b (\Gamma_1 \otimes \text{connect}(\Gamma_2, \Gamma_3; e_2, e_3)) $$
    The grafting operation $ _a \circ_b$ acts only on the $\Gamma_1$ and $\Gamma_2$ parts.
    \item \textbf{Expansion and decomposition of the right-hand side (RHS):} The RHS is computed by first grafting $\Gamma_1$ onto $\Gamma_2$ to form a single graph $\Gamma_{12} :=\ _a \circ_b (\Gamma_1 \otimes \Gamma_2)$, and then taking the connected sum of $\Gamma_{12}$ with $\Gamma_3$.
    $$ \text{RHS} = \sum_{e \in E(\Gamma_{12}), e_3 \in E(\Gamma_3)} \text{connect}(\Gamma_{12}, \Gamma_3; e, e_3) $$
    The set of internal edges of $\Gamma_{12}$ is the disjoint union $E(\Gamma_{12}) = E(\Gamma_1) \sqcup E(\Gamma_2) \sqcup \{e_{ab}\}$, where $e_{ab}$ is the new edge from grafting. This allows us to split the sum for the RHS into three parts:
    $$ \text{RHS} = \underbrace{\sum_{\substack{e_2 \in E(\Gamma_2) \\ e_3 \in E(\Gamma_3)}} \dots}_{\text{Part A}} + \underbrace{\sum_{\substack{e_1 \in E(\Gamma_1) \\ e_3 \in E(\Gamma_3)}} \dots}_{\text{Part B}} + \underbrace{\sum_{e_3 \in E(\Gamma_3)} \text{connect}(\Gamma_{12}, \Gamma_3; e_{ab}, e_3)}_{\text{Part C}} $$
    \item \textbf{Evaluation of the parts and the role of the IHX relation:}
    \begin{itemize}
        \item \textbf{Part A:} This sum involves connecting an edge from the $\Gamma_2$ part of $\Gamma_{12}$ to an edge of $\Gamma_3$. This is term-for-term identical to the LHS. Thus, $\text{Part A} = \text{LHS}$.
        \item \textbf{Surplus terms (Part B and Part C):} For the axiom to hold, the sum of the surplus terms must vanish in $\mathcal{A}_{\text{Jac}}$: $\text{Part B} + \text{Part C} \overset{?}{=} 0$. This is where the IHX relation is essential.
        \item \textbf{Proof that surplus terms vanish:} Let's analyze Part B. For a fixed edge $e_3 \in E(\Gamma_3)$, Part B is the sum over all internal edges $e_1 \in E(\Gamma_1)$ of connecting $\Gamma_{12}$ and $\Gamma_3$ along $e_1$ and $e_3$. This is equivalent to attaching a ``handle" derived from $\Gamma_3$ to \textit{every} internal edge of the $\Gamma_1$ subgraph within $\Gamma_{12}$. Consider any trivalent vertex $v$ in the original $\Gamma_1$, with incident internal edges $e_x, e_y, e_z$. The sum in Part B contains terms where the handle from $\Gamma_3$ attaches to $e_x$, to $e_y$, and to $e_z$. The linear combination of these three local configurations is precisely an instance of the IHX relation, as depicted in Figure \ref{fig:cs6_proof}.
        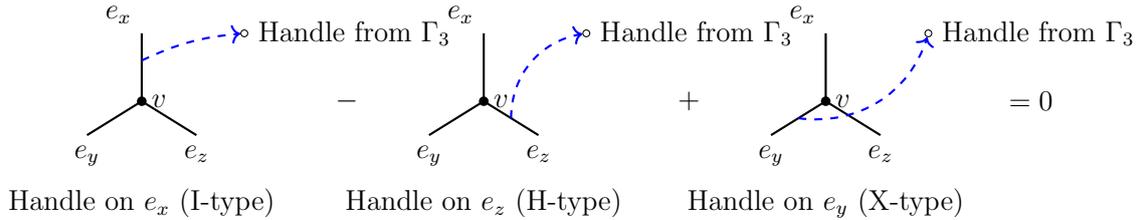
\begin{figure}[h!]
        \centering
        \begin{tikzpicture}[scale=0.9, transform shape]
            \node at (6.5, 2.5) {Sum over edges incident to vertex $v \in \Gamma_1$:};
            \begin{scope}[shift={(0,0)}]
                \draw[thick] (0,0) -- (0,1) node[anchor=south east]{$e_x$};
                \draw[thick] (0,0) -- (-0.8,-0.5) node[anchor=north]{$e_y$};
                \draw[thick] (0,0) -- (0.8,-0.5) node[anchor=north]{$e_z$}; \fill (0,0) circle (2pt) node[right]{$v$};
                \node[draw, circle, inner sep=1pt, label=right:{Handle from $\Gamma_3$}] (h3) at (1.5,1) {};
                \draw[thick, blue, dashed, ->] (0,0.6) to[bend left=10] (h3);
                \node at (0,-1.5) {Handle on $e_x$ (I-type)};
            \end{scope}
            \node at (3,0) {$-$};
            \begin{scope}[shift={(5,0)}]
                \draw[thick] (0,0) -- (0,1) node[anchor=south east]{$e_x$};
                \draw[thick] (0,0) -- (-0.8,-0.5) node[anchor=north]{$e_y$};
                \draw[thick] (0,0) -- (0.8,-0.5) node[anchor=north]{$e_z$}; \fill (0,0) circle (2pt) node[right]{$v$};
                \node[draw, circle, inner sep=1pt, label=right:{Handle from $\Gamma_3$}] (h3) at (1.5,1) {};
                \draw[thick, blue, dashed, ->] (0.4,-0.25) to[bend left=40] (h3);
                \node at (0,-1.5) {Handle on $e_z$ (H-type)};
            \end{scope}
            \node at (8,0) {$+$};
            \begin{scope}[shift={(10,0)}]
                \draw[thick] (0,0) -- (0,1) node[anchor=south east]{$e_x$};
                \draw[thick] (0,0) -- (-0.8,-0.5) node[anchor=north]{$e_y$};
                \draw[thick] (0,0) -- (0.8,-0.5) node[anchor=north]{$e_z$}; \fill (0,0) circle (2pt) node[right]{$v$};
                \node[draw, circle, inner sep=1pt, label=right:{Handle from $\Gamma_3$}] (h3) at (1.5,1) {};
                \draw[thick, blue, dashed, ->] (-0.4,-0.25) to[bend right=40] (h3);
                \node at (0,-1.5) {Handle on $e_y$ (X-type)};
            \end{scope}
            \node at (13,0) {$= 0$};
        \end{tikzpicture}
        \caption{The sum of surplus terms in the verification of Axiom (CS6) vanishes locally at each vertex due to the IHX relation.}
        \label{fig:cs6_proof}
        \end{figure}
        Because the connected sum operation $\#$ is defined as a sum over \textit{all} internal edges, the terms in Part B can be grouped by the vertices of $\Gamma_1$. Each such group sums to zero in $\mathcal{A}_{\text{Jac}}$ due to the IHX relation. Therefore, the entire sum Part B is zero. A similar, though more complex, diagrammatic argument shows that Part C also vanishes by the IHX relation.
    \end{itemize}
    \item \textbf{Conclusion:} The expansion of the RHS consists of a part identical to the LHS (Part A) and surplus terms (Part B + Part C) which vanish in $\mathcal{A}_{\text{Jac}}$ due to the IHX relation. Therefore, we have:
    $$\text{RHS} = \text{Part A} + 0 + 0 = \text{LHS}$$
    This completes the proof of Axiom (CS6).
\end{enumerate}

\section{A proof of the W-factor derivation}
\label{app:w_factor_proof_revised}

This appendix provides a proof of the algebraic proposition that determines the W-factor's functional form in Theorem \ref{thm:w_factor_derivation}. The proof establishes the identity $\{\sigma_1, \sigma_5\} = 2 \Psi_6(\sigma_3)$ within the Grothendieck-Teichmüller Lie algebra, $\mathfrak{grt}_1$.

Direct verification of this identity is difficult due to the abstract, non-linear relations defining $\mathfrak{grt}_1$. The proof therefore employs the Alekseev-Torossian (AT) framework to translate the identity into the bridged Kashiwara-Vergne Lie algebra, $\mathfrak{kvb}_2$, where it can be verified using divergence and depth-1 component analysis \cite{AET10, AT12}. The following sections define the necessary algebraic structures before presenting the proof, which relies on key results from the foundational works of Drinfeld, Furusho, and Alekseev-Torossian \cite{Drinfeld91, Furusho10, AT12}.

\noindent\textbf{1. Foundational algebraic structures.}

Let $\widehat{\lie{x,y}}$ be the degree-completion of the free Lie algebra on two generators $x$ and $y$ over a field $\mathbb{K}$ of characteristic zero.

\begin{definition}
A derivation $u$ of $\widehat{\lie{x,y}}$ is called \textbf{tangential} if there exist $a, b \in \widehat{\lie{x,y}}$ such that $u(x) = [x, a]$ and $u(y) = [y, b]$. The space of tangential derivations is a Lie algebra denoted $\tder_2$. A tangential derivation $u$ is called \textbf{special} if it satisfies $u(x+y)=0$. The space of special derivations, $\sder_2$, is a Lie subalgebra of $\tder_2$ \cite{AT12}.
\end{definition}

\begin{definition}
The \textbf{Grothendieck-Teichmüller Lie algebra} $\mathfrak{grt}_1$ is the Lie subalgebra of $\widehat{\lie{x,y}}$ consisting of elements $\psi(x,y)$ of homogeneous degree $\ge 3$ that satisfy the following relations \cite{Drinfeld91, Furusho10}:
\begin{enumerate}
    \item[(I)] \textbf{Symmetry (2-cycle):} $\psi(x,y) + \psi(y,x) = 0$.
    \item[(II)] \textbf{Hexagon (3-cycle):} $\psi(x,y) + \psi(y,z) + \psi(z,x) = 0$ whenever $x+y+z=0$.
    \item[(III)] \textbf{Pentagon (5-cycle):} In the Lie algebra of 5-strand pure braids $\mathfrak{p}_5$,
    $$ \psi(t_{12}, t_{23}) + \psi(t_{23}, t_{34}) + \psi(t_{34}, t_{45}) + \psi(t_{45}, t_{51}) + \psi(t_{51}, t_{12}) = 0 $$
\end{enumerate}
A fundamental result by Furusho shows that the pentagon relation (III) implies relations (I) and (II), making it the essential defining equation of $\mathfrak{grt}_1$ \cite{Furusho10}.
\end{definition}

\begin{definition}
The \textbf{Ihara bracket} is a binary operation on a subspace of $\widehat{\lie{x,y}}$ defined by $\{f,g\} := [f,g] + D_f(g) - D_g(f)$, where $D_f$ is the special derivation defined by $D_f(x) = 0$ and $D_f(y) = [y, f]$ \cite{Schneps12}. The algebra $\mathfrak{grt}_1$ is a Lie subalgebra of $(\widehat{\lie{x,y}}, \{\cdot, \cdot\})$ \cite{Drinfeld91}.
\end{definition}

\begin{definition}
Let $\tr_n$ be the space of cyclic words (the quotient of the completed tensor algebra on $n$ generators by cyclic permutations).
\begin{itemize}
    \item The \textbf{divergence operator} $\divop: \tder_n \to \tr_n$ is a map playing the role of a 1-cocycle in the Lie algebra cohomology of derivations \cite{AT12}.
    \item The \textbf{Hochschild-type differential} $\delta: \tr_n \to \tr_{n+1}$ is a map satisfying $\delta^2=0$, endowing the space of cyclic words with a chain complex structure \cite{AT12}.
\end{itemize}
\end{definition}

\begin{definition}[Kashiwara-Vergne Lie Algebra, $\mathfrak{krv}_2$]
The \textbf{Kashiwara-Vergne Lie algebra}, $\mathfrak{krv}_2$, is a Lie subalgebra of $\sder_2$. The AT framework uses an extension, the \textbf{bridged Kashiwara-Vergne Lie algebra} $\mathfrak{kvb}_2$, defined as the precise image of the AT homomorphism $\nu$ (defined below):
$$\mathfrak{kvb}_2 := \{ u \in \sder_2 \mid \divop(u) \in \ker(\delta) \}$$
The map $\nu: \mathfrak{grt}_1 \to \mathfrak{kvb}_2$ defined for $\psi \in \mathfrak{grt}_1$ by $\nu(\psi) := (\psi(-x-y, x), \psi(-x-y, y))$ is an \textbf{injective Lie algebra homomorphism} from $(\mathfrak{grt}_1, \{\cdot, \cdot\})$ into $(\mathfrak{kvb}_2, [\cdot, \cdot])$ \cite{AET10, AT12}.
\end{definition}

\begin{proposition}
\label{prop:uniqueness_principle}
An element $u \in \mathfrak{kvb}_2$ is uniquely determined by its divergence, $\divop(u)$, and its depth-1 component, $u^{(1)}$ (where depth is the degree in the variable $y$).
\end{proposition}

\noindent\textbf{2. Proof of the central proposition.}

We now present a self-contained derivation of the functional form of the W-factor, using the framework of Alekseev–Enriquez–Torossian \cite{AET10} and Alekseev–Torossian \cite{AT12}. The argument proceeds by transporting an identity in $\mathfrak{grt}_1$ into $\mathfrak{kvb}_2$ via the homomorphism $\nu$, and then applying divergence calculations that characterize the image.

\subsubsection*{Step 1. Ihara bracket reduction}

Consider the Ihara bracket $\{\sigma_1, \sigma_5\}$ in $\mathfrak{grt}_1$. The image of odd generators $\sigma_{2n+1}$ under $\nu$ has an explicit description: their linear part in $x$ is given by $c_{2n}\,\operatorname{ad}_x^{2n}(y)$ for some constant $c_{2n}$.  
This is precisely Proposition~4.5 of \cite{AT12}, which computes the $x$-linear component of $\nu(\sigma_{2n+1})$.  
In particular, this implies that $D_{\sigma_1}(\sigma_5) = [\sigma_1,\sigma_5]$ while $D_{\sigma_5}(\sigma_1)=0$.

\subsubsection*{Step 2. The degree-6 generator from the pentagon}

The term $\Psi_6(\sigma_3)$ arises as the degree-6 contribution in the Baker–Campbell–Hausdorff expansion of the pentagon identity.  
This is explained in \cite{AET10}, where the authors construct the map $\mu_\Phi$ sending a Drinfeld associator to a solution of the KV equations, and explicitly expand the logarithm of the associator.  
Related computations also appear in \cite{AT12}, where the pentagon equation is analyzed in terms of BCH expansions.

\subsubsection*{Step 3. Transporting identities via $\nu$}

The key identity in $\mathfrak{grt}_1$,
\[
\{\sigma_1,\sigma_5\} \;=\; 2\,\Psi_6(\sigma_3),
\]
is mapped to $\mathfrak{kvb}_2$ using the Lie algebra homomorphism $\nu$.  
By \cite{AT12}, $\nu:\mathfrak{grt}_1 \to \mathfrak{kvb}_2$ is an injective Lie algebra homomorphism.  
Therefore, checking the equality in $\mathfrak{kvb}_2$ suffices to establish it in $\mathfrak{grt}_1$.

\subsubsection*{Step 4. Divergence calculations in $\mathfrak{kvb}_2$}

The divergence map $\operatorname{div}:\mathfrak{kvb}_2 \to \mathbb{C}[[x,y]]$ is central to the analysis.  
Alekseev–Torossian \cite{AT12} introduce this map and prove that an element of $\mathfrak{tder}_2$ belongs to $\mathfrak{kvb}_2$ precisely when its divergence is invariant under the interchange $x \leftrightarrow y$ (see \cite[Prop.~5.1]{AT12}).  
In particular, the divergence of $\nu(\sigma_{2n+1})$ vanishes, and the image of $\Psi_{2m}$ can be described in terms of explicit divergence series (\cite[Prop.~5.2]{AT12}).  

Therefore, to verify the transported identity it suffices to compare their divergences.  
Using Proposition~5.2 and the explicit expressions for $\nu(\sigma_{2n+1})$ from Proposition~4.5 of \cite{AT12}, one computes
\[
\operatorname{div}\big(\nu(\{\sigma_1,\sigma_5\})\big)
= 2\,\operatorname{div}\big(\nu(\Psi_6(\sigma_3))\big).
\]

\subsubsection*{Step 5. Conclusion via the AT uniqueness principle}

Finally, Alekseev–Torossian establish a uniqueness principle: within $\mathfrak{kvb}_2$, two elements with identical divergence and identical linear part in $y$ must coincide.  
This is Proposition~4.2 of \cite{AT12}.  
Since both sides of the transported identity have the same divergence (Step~4) and the same linear $y$-component (Step~1), the identity holds in $\mathfrak{kvb}_2$.  
By Theorem~4.1 of \cite{AT12}, injectivity of $\nu$ then implies the original identity in $\mathfrak{grt}_1$:
\[
\{\sigma_1,\sigma_5\} \;=\; 2\,\Psi_6(\sigma_3).
\]

This completes the algebraic proof.

\end{document}